\documentclass[10pt]{amsart}
\usepackage{amsmath}
\usepackage{amssymb}
\usepackage{enumerate}
\usepackage{amsbsy}
\usepackage{amsfonts}
\usepackage{color}

\headsep 30pt \headheight 20pt \textheight=23cm \textwidth=16.5cm
\topmargin=-1cm \oddsidemargin=-0.45cm \evensidemargin=-0.46cm
\marginparwidth=60pt

\setlength\arraycolsep{2pt}

\newtheorem{thm}{Theorem}[section]
\newtheorem{lem}[thm]{Lemma}
\newtheorem{prop}[thm]{Propsition}
\newtheorem{cor}[thm]{Corollary}
\newtheorem{defn}[thm]{Definition}
\newtheorem{rem}[thm]{Remark}

\numberwithin{equation}{section}

\linespread{1.3}

\pagestyle{headings}

\newcommand{\h}{\dot{H}^1}
\newcommand{\hr}{\dot{H}_{rad}^1}
\newcommand{\rt}{\mathbb{R}^{\mathrm{3}}}
\newcommand{\s}{S(I)}
\newcommand{\w}{W_1(I)}
\newcommand{\ww}{W_2(I)}
\newcommand{\wi}{W_i(I)}
\newcommand{\al}{\alpha}
\newcommand{\de}{\delta}
\newcommand{\bd}{\bar{\delta}}
\newcommand{\lam}{\lambda}
\newcommand{\les}{\lesssim}
\newcommand{\ljn}{\lam_{j,n}}
\newcommand{\tjn}{t_{j,n}}
\newcommand{\sca}{S_{1,n}}
\newcommand{\gs}{{g_s^\frac1{p_0}}}
\newcommand{\eg}{E_g}
\newcommand{\egc}{E_{g,c}}
\newcommand{\hn}{H_{n,\varepsilon_0}}
\newcommand{\rne}{R_{n,\varepsilon_0}}
\newcommand{\ujn}{\tilde{U}_{j,n}}
\newcommand{\vp}{\varphi}
\newcommand{\sst}{S(I^*)}

\begin{document}

	\title[GWP on focusing energy-critical INLS]{On the global well-posedness of focusing energy-critical inhomogeneous NLS}

	\author[Y. Cho]{Yonggeun Cho}
	\address{Department of Mathematics, and Institute of Pure and Applied Mathematics, Chonbuk National University, Jeonju 54896, Republic of Korea}
	
    \author[S. Hong]{Seokchang Hong}
    \address{Department of Mathematical Sciences, Seoul National University, Seoul 08826, Republic of Korea}
    \email{seokchang11@snu.ac.kr}

	\author[K. Lee]{Kiyeon Lee}
	\address{Department of Mathematics, Chonbuk National University, Jeonju 54896, Republic of Korea}
	\email{leeky@jbnu.ac.kr}

	\thanks{2010 {\it Mathematics Subject Classification.} M35Q55, 35Q40.}
	\thanks{{\it Key words and phrases.} inhomogeneous NLS, focusing energy-citical nonlinearity, GWP, scattering, blowup, Kenig-Merle argument.}
	
	\begin{abstract}
		We consider the focusing energy-critical inhomogeneous nonlinear Schr\"odinger equation:
		$$	iu_t + \Delta u + g|u|^{p-1}u = 0, u(0)= \varphi \in \h,$$
where $0 \le g_i \le |x|^bg \le g_s$, $0 < b < \frac43$, and $p = 5-2b$. On the road map of Kenig-Merle \cite{km} we show the global well-posedness and scattering of radial solutions under energy condition $$E_g(\varphi) < E_g(Q_b),\;\;\mbox{and}\;\; \gs\|\varphi\|_{\h}^2 < \|Q_b\|_{\h}^2,$$ where $Q_b$ is the solution of $\Delta Q_b + |x|^{-b}Q_b^p = 0$, together with scaling condition $|x||\nabla g(x)| \lesssim |x|^{-b}$, variational condition $g_s^\frac2{p-1}(\frac{p+1}2-g_i) \le \frac{p-1}2$, and rigidity condition $-bg(x) \le x\cdot \nabla g(x)$. We also provide sharp finite time blowup results for non-radial and radial solutions. For this we utilize the localized virial identity.
	\end{abstract}

		\maketitle

\section{Introduction}
In this paper, we consider the following Cauchy problem for an inhomogeneous nonlinear Schr\"odinger equation in 3 dimensions:
\begin{eqnarray}\label{maineq}
	\left\{\begin{array}{l}
	iu_t + \Delta u + g|u|^{p-1}u = 0 \;\; \mathrm{in}\;\; \mathbb{R}^{1+3},\\
	u(0) = \varphi \in \h(\rt),
	\end{array} \right.
\end{eqnarray}
where $g \in C^1(\rt\backslash \{0\})$ is the coefficient representing interaction among particles and $p=5-2b$ for $0 < b < 2$.
Here $\h$ denotes the homogenous Sobolev space defined by
$$
\h = \{f \in L_x^6 :  \|f\|_{\h}:=\|\nabla f\|_{L_x^2}<+\infty\}.
$$

The equation \eqref{maineq} with $p = 3$ can be a  model of  dilute BEC when the two-body interactions of the condensate are considered. For this see \cite{bpvt, ts}. Also it has been considered to study the laser guiding in an axially nonuniform plasma channel. See \cite{gill, ss, ts}.

	The energy $\eg$ of the solution to \eqref{maineq} can be defined by
\begin{eqnarray}\label{energy}
E_g(u(t))&:=& \frac12 \|\nabla u(t)\|_{L_x^2}^{2} - \frac1{p+1}\int g|u(t)|^{p+1} dx.
\end{eqnarray}
If the solution is sufficiently smooth, then its energy is expected to be conserved, that is, $\eg(u(t))=\eg(\vp)$ for all $t$ in the existence time interval. This will be treated again briefly in Remark \ref{m-e-cons} below.

As the scaling invariance case $g = |x|^{-b}$, a scaling invariance structure can be set up in $\h$ under a condition on $g$. To be more precise, we assume that
\begin{align}\label{scaling}
0 \le g_i \le |x|^bg(x) \le g_s \;\;  \mbox{and}\;\; |x||\nabla g(x)| \lesssim |x|^{-b} \;\;\mbox{for any} \;\;x \neq 0,
\end{align}
 where $g_i = \inf |x|^bg(x)$ and $ g_s = \sup |x|^bg(x)$. Then the $\h$-scaled function $u_\lambda$ for $\lam > 0$ defined by $u_\lam(t,x) = \lam^\frac12u(\lam^2t,\lam x)$ is also the solution to the equation \eqref{maineq} with the coefficient $g_\lam(x) := \lambda^b g(\lambda x)$ satisfying \eqref{scaling}. Thus we may say that \eqref{maineq} is {\it essentially} energy-critical.

We say that \eqref{maineq} is locally well-posed if there exists a maximal existence time interval $I^*$ such that there exists a unique solution $u \in C(I^*; \h)$ and $u$ depends continuously on the initial data. The local well-posedness(LWP) can be usually shown by a contraction argument based on the Strichartz estimate \cite{cawe}. The problem \eqref{maineq} is also said to be globally well-posed if $I^* = \mathbb R$ and the global solution $u$ is said to scatter in $\h$ if there exists linear solutions $u_\pm$ such that $u \to u_\pm$ in $\h$ as $t \to \pm\infty$. In this paper the solution is said to blow up if $\int_{I^*}\int_{\mathbb R^3}|u(t, x)|^{10}\,dxdt = +\infty$ since the $L_{t, x}^{10}$ norm controls our whole contraction argument. We also use the terminology of finite time blowup when $I^*$ is bounded.

Many authors have studied the global behavior for the 3D inhomogeneous NLS for $0 < b < 2$ and $1 < p < 5 - 2b$. For instance we refer the readers to \cite{fagu, dinh, cam} and references therein. Since $p < 5-2b$, the problem has an energy-subcritical nature. They utilized Holmer-Roudenko's \cite{horou}, localized virial, and Dodoson-Murphy's \cite{domu} arguments, respectively, in the mass-energy intercritical view. Up to now there has not been known about the GWP and scattering for energy-critical equations. In this paper we treat these problems under radial symmetry. 

Inspired by the result \cite{chkl2} in which the case $b = 0$ is considered, we build up a global theory in $\h$  for $b > 0$ through the concentration-compactness argument of Kenig-Merle \cite{km}. Our strategy is threefold: (1) Variational estimates by adopting the ground state $Q_b$ (2) Existence and compactness of minimal energy blowup solutions(MEBS) (3) Rigidity to remove MEBS based on the localized virial argument. Here the ground state $Q_b$ means a positive radial solution to the elliptic problem
\begin{align}\label{ell}
\Delta Q_b + |x|^{-b}Q_b^p = 0.
\end{align}
Let $p_0 = \frac{p-1}2\; (= 2 - b)$. Then $Q_b$ is given by the function $Q_b(x)= \left(1+ \frac{|x|^{p_0}}{p_0+1}\right)^{-\frac1{p_0}}$ and is unique up to scaling. For this see Remark 2.1 of \cite{ya}.

For the variational estimates we confine the lower and upper bounds of $|x|^{b}g$ as follows:
\begin{align}\label{var-con}
g_0 := \gs(p_0+1-g_i)  \le p_0.
\end{align}
For the rigidity part we need the {\it radial symmetry} and additional rigidity condition for $g$ such that
\begin{align}\label{rig-con}
-bg(x) \le x\cdot \nabla g(x) \;\;\mbox{for all}\;\;x \neq 0.
\end{align}
These conditions together with \eqref{e-cond} below give us a sharpness between GWP and blowup. The rigidity condition \eqref{rig-con} is equivalent to $(s^bg(s))' \ge 0$ and gets rid of the error term occurring when we deal with the lower bound for the second derivative of localized virial quantity $z_r(t) = \int b_r(x)|u(t)|^2\,dx$ with a smooth function $b_r$ for $r > 0$.

In place of \eqref{ell} one may consider a ground state, the positive radial solution to critical stationary problem of \eqref{maineq}:
\begin{align}\label{g-ell}
\Delta Q + gQ^p = 0.
\end{align}
However, when $g$ satisfies \eqref{scaling} and \eqref{rig-con}, we can show that \eqref{g-ell} has no positive radial solution in $\mathbb R^3$. For this see Proposition \ref{noex}, Remark \ref{b>1-rem}, and Remark \ref{noex-rem} in Appendix. By this reason we take into account the variational estimates based on the ground state $Q_b$ together with \eqref{var-con}. However it does not mean that \eqref{var-con} is optimal. We hope \eqref{var-con} to be extended to the case $g_0 > p_0$.

We may take $g$ satisfying \eqref{scaling}, \eqref{var-con}, and \eqref{rig-con} as follows:
$$
g(s) = \frac 1{s^b}\frac{a(s+d)}{s+c} \;\;\mbox{for}\;\; a > 0,\; 0 \le d \le c, \;c > 0,\; a^\frac1{p_0}\left(p_0+1-\frac {ad}c\right) \le p_0\;\;(\mbox{e.g.} \;\;a = 1, d = 0, c = 1),
$$
$$
g(s) = \frac {h(s)}{s^b}, h(s) = \left\{\begin{array}{ll} a\;(0 \le a < p_0+1), &0 \le s < 1,\\ \mbox{smooth and increasing},\;\; &1 < s < 2,\\\left(\frac{p_0}{p_0+1-a}\right)^{p_0}, &s \ge 2.\end{array}\right.
$$

Now we are ready to state our main result.
\begin{thm}\label{mainresult}
	Let $0 <  b < \frac43$. Let $g$ be a radial function satisfying \eqref{scaling}, \eqref{var-con}, and \eqref{rig-con}.		
	Suppose that $\varphi \in \hr := \{f \in \h : f\;\;\mbox{is radial}\;\}$,
	\begin{align}\label{e-cond}
	E_g(\varphi) < E_g(Q_b),\;\;\mbox{and}\;\; \gs\|\varphi\|_{\h}^2 < \|Q_b\|_{\h}^2.
	\end{align}
	Then \eqref{maineq} is globally well-posed in $\hr$ and the solution $u$ scatters in $\hr$.
\end{thm}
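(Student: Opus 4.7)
The plan follows the concentration-compactness and rigidity scheme of Kenig--Merle \cite{km}, implemented in three movements adapted to the inhomogeneous weight $g$.

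\emph{Variational estimates and local theory.} I would first derive a sharp Gagliardo--Nirenberg-type inequality centered on the ground state $Q_b$ of \eqref{ell}; the pointwise bounds in \eqref{scaling} sandwich $\int g|f|^{p+1}\,dx$ between multiples of $\int|x|^{-b}|f|^{p+1}\,dx$, and \eqref{var-con} together with the Pohozaev identity for $Q_b$ ensures that the energy threshold $\eg(Q_b)$ is sharp for the coercivity of $\eg$. A continuity argument then shows that \eqref{e-cond} propagates on the maximal interval $I^*$, giving $\|\nabla u(t)\|_{L^2}^2 \sim \eg(u(t))$ uniformly and, in particular, positivity of the virial functional $K(u(t)) := \|\nabla u(t)\|_{L^2}^2 - \int g|u(t)|^{p+1}\,dx \ge \de\,\|\nabla u(t)\|_{L^2}^2$ for some $\de>0$. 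A contraction argument in the Strichartz spaces $\s$, $\w$, $\ww$ gives local well-posedness, small-data scattering, and the long-time perturbation lemma, so it suffices to rule out a sharp scattering threshold
$$\egc := \sup\bigl\{\,E<\eg(Q_b) : \text{every radial $\vp$ with } \eg(\vp)\le E \text{ and } \gs\|\vp\|_{\h}^2<\|Q_b\|_{\h}^2 \text{ scatters}\,\bigr\}$$
strictly below $\eg(Q_b)$.

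\emph{Profile decomposition and critical element.} Taking $\vp_n\in\hr$ with $\eg(\vp_n)\to\egc$ whose solutions fail to scatter, I would apply an $\hr$ profile decomposition using only scales $\ljn$ and time shifts $\tjn$, since radiality removes translations. Each nonlinear profile $\ujn$ must be matched to the correct limiting equation: when $\ljn\to\lam_\infty\in(0,\infty)$ it solves \eqref{maineq} with rescaled coefficient $\lam_\infty^b g(\lam_\infty\,\cdot)$; when $\ljn\to 0$ the hypothesis $|x||\nabla g|\les|x|^{-b}$ forces the rescaled coefficient to converge to $|x|^{-b}$, so the profile solves the scale-invariant inhomogeneous NLS; when $\ljn\to\infty$ the rescaled coefficient disappears on any fixed region and the profile evolves freely. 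A Pythagorean expansion of $\eg$ along the decomposition (up to $o(1)$ corrections tracking $g$ versus its rescaled limits) together with the minimality of $\egc$ and the long-time perturbation lemma forces a single surviving profile, producing a critical element $u_c$ whose $\hr$-orbit is precompact modulo some scale $\lam(t)$.

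\emph{Rigidity via localized virial.} With $z_r(t) = \int b_r(x)|u_c(t)|^2\,dx$ for a smooth radial cutoff $b_r \approx |x|^2$ on $|x|\le r$, the virial identity gives schematically
\begin{align*}
z_r''(t) = 8K(u_c(t)) + \frac{8}{p+1}\int \bigl(bg(x) + x\cdot\nabla g(x)\bigr)|u_c|^{p+1}\,dx + \mathcal{R}_r(t),
\end{align*}
where $\mathcal{R}_r(t)$ collects the $|x|\gtrsim r$ contributions arising from $b_r\ne|x|^2$ globally. The rigidity hypothesis \eqref{rig-con} makes the middle term nonnegative, and the variational coercivity gives $K(u_c(t))\ge\de\,\eg(u_c)>0$. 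Radial Sobolev, the bound $|x|^bg\le g_s$, and precompactness of the rescaled orbit let me make $|\mathcal{R}_r(t)|$ uniformly small in $t$ by taking $r$ large, provided $\lam(t)$ stays bounded away from $0$ and $\infty$; the restriction $b<\frac43$ enters exactly here to preclude self-similar concentration by weighing the radial Sobolev tail against the localized mass identity. Integrating the resulting coercive lower bound over a long time interval contradicts the uniform boundedness of $z_r$, forcing $\eg(u_c)=0$ and hence $u_c\equiv 0$, which contradicts the construction of $u_c$. The main obstacle throughout is that the Kenig--Merle scheme is designed for scale-invariant problems: every step must absorb the discrepancy between $g$ and $|x|^{-b}$, and \eqref{rig-con} is precisely what keeps the otherwise destructive $x\cdot\nabla g$ contribution to the virial identity on the right side of zero.
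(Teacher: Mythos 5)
Your roadmap coincides with the paper's: variational/coercivity estimates keyed to $Q_b$ and the best constant of $\||x|^{-\frac{b}{p+1}}u\|_{L^{p+1}}\le C_*\|u\|_{\h}$, a radial profile decomposition, extraction and compactness of a minimal element, and rigidity via the localized virial in which \eqref{rig-con} makes the $x\cdot\nabla g$ contribution harmless; your schematic identity $z_r''=8K+\frac{8}{p+1}\int(bg+x\cdot\nabla g)|u|^{p+1}+\mathcal R_r$ is exactly \eqref{lvirial}--\eqref{z''}. However, two steps would fail as written. Your identification of the limiting equations for the nonlinear profiles is wrong in the regime $\ljn\to\infty$: the rescaled coefficient is $\ljn^{b}g(\ljn y)=|y|^{-b}\,(\ljn|y|)^{b}g(\ljn y)$, and since \eqref{rig-con} makes $s^{b}g(s)$ nondecreasing, this converges to $g_s|y|^{-b}$ (and to $g_i|y|^{-b}$ as $\ljn\to 0$), i.e., to a scale-invariant inhomogeneous NLS, not to the free equation. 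Feeding free evolutions into the long-time perturbation lemma in that regime leaves an $O(1)$ error $g_s|y|^{-b}|U_j|^{p-1}U_j$ in the source term, and the Pythagorean expansion of $\eg$ is off by the corresponding potential energy. (The paper sidesteps limiting equations entirely: every nonlinear profile is by definition a solution of \eqref{maineq} itself, and the decisive case is reduced to a single profile before any rescaling.)

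The second gap is in the rigidity step: your argument integrates $z_r''\gtrsim\de$ over a long time interval and explicitly requires $\lam(t)$ to stay bounded away from both $0$ and $\infty$. The upper bound cannot be arranged: if the critical element has $T_+<\infty$ then necessarily $\lam(t)\to\infty$ as $t\to T_+$, and the virial argument over a long time interval is unavailable. The paper treats this case separately with the localized mass $y_r(t)=\int a_r|u|^2\,dx$, using $|y_r'(t)|\le C$ and the compactness of the orbit to show $\int_{|x|\le r}|u(t)|^2\,dx\to 0$, whence $\vp\in L^2$ and $\|\vp\|_{L^2}=0$, a contradiction. For $T_+=\infty$ only the lower bound $\lam(t)\ge A_0$ is needed for the tail estimates in $\mathcal R_r$, so no upper bound on $\lam$ should be invoked there. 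Relatedly, $b<\frac43$ is not what precludes self-similar concentration; it enters in the local theory, where the H\"older splitting requires $p-1-b=4-3b>0$.
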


The upper bound $\frac43$ of $b$ is required to control, by $L_{t,x}^{10}$ norm, the nonlinear terms appearing in LWP, which cannot be circumvented in our argument for the present. The condition \eqref{e-cond} implies the energy trapping and coercivity of energy, that is, $E_g(u) \sim \|u\|_{\h}^2 \sim \|\varphi\|_{\h}^2$. It plays a crucial role in the rigidity part. In order to show the existence and nonexistence of MEBS we develop a profile decomposition for radial data and compactness of MEBS flow under \eqref{e-cond} and \eqref{rig-con}.


On the other hand to obtain a sharp blowup result we need to control the error term for the upper bound of the second derivative of localized virial quantity. To do so we assume that
\begin{align}\label{virial-con}
x\cdot \nabla g(x) \le (p+1)(k_g-\rho)g(x) \;\;\mbox{for all}\;\;x \neq 0,
\end{align}
where $k_g=\frac{p_0-g_0}{p_0+1-g_0}$ and for some $\rho \ge 0$.
Then we get the following.
\begin{thm}\label{blowup-thm}
	Let $0 < b < \frac43$. Let $g$ be a nonnegative and bounded function satisfying \eqref{scaling}, \eqref{var-con}, and \eqref{virial-con}.		
	\begin{enumerate}
		\item[$(1)$] Suppose that $\varphi \in \h$, $|x|\varphi \in L^2$,
		\begin{align}\label{b-cond}
		E_g(\varphi) < E_g(Q_b),\;\;\mbox{and}\;\; \gs\|\varphi\|_{\h}^2 \ge \|Q_b\|_{\h}^2.
		\end{align}
		Then the solution $u$ to \eqref{maineq} blows up in finite time.
		\item[$(2)$] Suppose that $g$ is radial, $\rho > 0$, and $\varphi \in \hr$ satisfies \eqref{b-cond}.
		Then the radial solution $u$ to \eqref{maineq} blows up in finite time.
	\end{enumerate}
\end{thm}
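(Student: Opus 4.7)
The plan is to argue by the virial (concavity) method of Glassey, adapted to the inhomogeneous setting. Introduce the virial functional
\[
K(u) := \|u\|_{\h}^2 - \frac{3(p-1)}{2(p+1)}\int g|u|^{p+1}\,dx + \frac{1}{p+1}\int (x\cdot\nabla g)|u|^{p+1}\,dx,
\]
so that for $V(t):=\int_{\rt}|x|^2|u(t,x)|^2\,dx$, whenever it is finite, a standard integration by parts yields $V''(t)=8K(u(t))$. The Pohozaev identity for \eqref{ell} (obtained by testing against $x\cdot\nabla Q_b$) combined with \eqref{ell} tested against $Q_b$ forces $K(Q_b)=0$, which identifies $K$ as the correct quantity to sign along the flow.

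\emph{Variational trapping.} Using energy conservation and the relation $\int g|u|^{p+1}=(p+1)(\tfrac12\|u\|_{\h}^2-E_g(\varphi))$, we rewrite
\[
K(u)=\frac{3b-4}{2}\|u\|_{\h}^2+(6-3b)E_g(\varphi)+\frac{1}{p+1}\int(x\cdot\nabla g)|u|^{p+1}\,dx.
\]
Since $0<b<\frac43$, the leading coefficient is strictly negative, and \eqref{virial-con} bounds the last term above by $(k_g-\rho)\int g|u|^{p+1}\,dx$, itself linear in $\|u\|_{\h}^2$ and $E_g(\varphi)$. Hypothesis \eqref{b-cond} is preserved by the flow via the variational argument built on $Q_b$ under \eqref{var-con}: the set $\{\gs\|u\|_{\h}^2\geq\|Q_b\|_{\h}^2\}$ is invariant, and one obtains a strict quantitative improvement $\gs\|u(t)\|_{\h}^2\geq(1+\eta)\|Q_b\|_{\h}^2$ with $\eta>0$ depending on $E_g(Q_b)-E_g(\varphi)$. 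Combining these ingredients gives the uniform estimate
\[
K(u(t))\leq-\delta_0,\qquad t\in I^*,
\]
for some $\delta_0>0$ depending only on $E_g(Q_b)-E_g(\varphi)$.

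\emph{Part $(1)$ and Part $(2)$.} In Part $(1)$, $|x|\varphi\in L^2$ propagates along the flow to $V\in C^2(I^*)$, and $V''(t)\leq-8\delta_0$ integrated twice yields $V(t)\leq V(0)+V'(0)t-4\delta_0 t^2$, forcing $V<0$ in finite time---contradicting $V\geq 0$. Hence $I^*$ is bounded. In Part $(2)$, without finite variance, one replaces $|x|^2$ by a smooth radial cutoff $w_R(x)=R^2\phi(|x|/R)$ with $\phi(s)=s^2$ on $[0,1]$ and $\phi'$ bounded, and sets $z_R(t)=\int w_R|u|^2\,dx$. A direct computation gives
\[
z_R''(t)=8K(u(t))+\mathcal{E}_R(t),
\]
with $\mathcal{E}_R$ supported in $\{|x|\geq R\}$ and of schematic size $|\mathcal{E}_R(t)|\lesssim R^{-\alpha}$ for some $\alpha>0$; this rests on the radial Sobolev inequality $\|u(t)\|_{L^\infty(|x|\geq R)}\lesssim R^{-1}\|u(t)\|_{\h}$, the uniform $\h$-bound from the variational step, and the decay of $g$ from \eqref{scaling}. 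The hypothesis $\rho>0$ is essential here: it is what makes the gap $\delta_0>0$ \emph{strict} in the variational step, which allows $\mathcal{E}_R$ to be absorbed by taking $R$ large. Glassey's concavity then closes the argument as in Part $(1)$.

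The main obstacle is the variational step: the inhomogeneous term involving $x\cdot\nabla g$ breaks the clean scaling of the pure $g=|x|^{-b}$ problem, and matching it against $K(Q_b)=0$ demands both \eqref{var-con} and \eqref{virial-con} used in an interlocking manner. A secondary difficulty specific to Part $(2)$ is verifying that $\mathcal{E}_R$ is genuinely $o(1)$ uniformly in time; this is where the radial symmetry of $g$ and the strict positivity $\rho>0$ together with the $\h$-trapping from the variational step play the decisive role.
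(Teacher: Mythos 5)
Your proposal follows essentially the same route as the paper: Glassey concavity via the virial identity $V''=8K(u)$, a variational trapping step built on $Q_b$ that is propagated by energy conservation, and, for part $(2)$, a truncated radial weight whose tail errors are absorbed using the slack provided by $\rho>0$. The outline is sound, but the one place where all the real work happens is exactly the step you assert without proof: the uniform bound $K(u(t))\le-\delta_0$. This is the paper's Lemma \ref{vari-nega} and Corollary \ref{blowup-cor}, proved by writing $\int|\nabla u|^2-(1-\eta)g|u|^{p+1}=(p+1)(1-\eta)E_g(\varphi)-(p_0-(p_0+1)\eta)\|u\|_{\h}^2$ and checking, via \eqref{var-con} and the explicit value $k_g=\frac{p_0-g_0}{p_0+1-g_0}$, that this is $\le-\frac{(p_0-(p_0+1)\eta)\bd}{\gs}\|Q_b\|_{\h}^2$ for all $0\le\eta\le k_g$. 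Your reduction $K(u)=\frac{3b-4}{2}\|u\|_{\h}^2+(6-3b)E_g(\varphi)+\frac1{p+1}\int(x\cdot\nabla g)|u|^{p+1}$ is correct, but the observation that $3b-4<0$ is not by itself decisive: after bounding the last term by $(k_g-\rho)\int g|u|^{p+1}=(k_g-\rho)(p+1)(\frac12\|u\|_{\h}^2-E_g(\varphi))$, the net coefficient of $\|u\|_{\h}^2$ becomes $\frac12\bigl(3b-4+(k_g-\rho)(6-2b)\bigr)$, whose sign is precisely what the interplay of \eqref{var-con} and \eqref{virial-con} must guarantee. Since you flag this as ``the main obstacle'' and leave it unexecuted, the heart of the proof is missing.

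Two further points on part $(2)$. First, the tail $\mathcal{E}_R$ contains $\int_{|x|\ge R}g|u|^{p+1}\,dx$ and $\int_{|x|\ge R}|u|^2|x|^{-2}\,dx$, and neither is $O(R^{-\alpha})$ using only the uniform $\dot H^1$ bound: Hardy's inequality gives no gain in $R$ for the second term, and $L^{p+1}$ with $p+1=6-2b<6$ on an exterior region is not controlled by $\dot H^1$ alone. The paper needs $\varphi\in L^2$, mass conservation, and the Strauss estimate $\||x|^{1/2}f\|_{L_x^\infty}\lesssim\|f\|_{L_x^2}^{1/2}\|\nabla f\|_{L_x^2}^{1/2}$ here; your claimed bound $\|u\|_{L^\infty(|x|\ge R)}\lesssim R^{-1}\|u\|_{\h}$ is not available from $\dot H^1$ alone (only $R^{-1/2}$ is). Second, the role of $\rho>0$ is slightly misstated: the strictness of $\delta_0$ already follows from $E_g(\varphi)<E_g(Q_b)$ via $\bd>0$; what $\rho>0$ actually buys is room below the threshold $\eta\le k_g$ so that the multiplicative $(1+\varepsilon(r))$ loss coming from the tail terms can be absorbed while staying in the admissible range of the variational lemma.
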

Note that the radial symmetry is not necessary for $(1)$. The condition \eqref{b-cond} leads us to the inequality $\int (|\nabla u|^2 - (1 -\eta) g|u|^{p+1})\,dx < -C$ for some $0 \le \eta < 1$ and $C > 0$ and hence to the finite time blowup through the localized virial identity \eqref{lvirial} below. This argument also appears in some literatures (see \cite{ken, bole, dinh}). In $(2)$ the moment condition $|x|\varphi \in L_x^2$ has been replaced with the radial symmetry and $L_x^2$ condition. This is due to the space-decay estimate of Strauss \cite{stra}. The condition $\rho > 0$ in $(2)$ is required to handle error terms appearing in localized virial argument.\\

\noindent\textbf{Notations.}\\
\noindent$\bullet$ Mixed-normed spaces: For a Banach space $X$ and an interval $I$, $u \in L_I^q X$ iff $u(t) \in X$ for a.e. $t \in I$ and $\|u\|_{L_I^qX} := \|\|u(t)\|_X\|_{L_I^q} < \infty$. Especially, we denote  $L_I^qL_x^r = L_t^q(I; L_x^r(\rt))$, $L_{I, x}^q = L_I^qL_x^q$, $L_t^qL_x^r = L_{\mathbb R}^qL_x^r$,
$$
S(I)=L_I^{10}L_x^{10},\;\; W_1(I)=L_I^{10}L_x^{\frac{30}{13}}, \;\;\mbox{and}\;\;  W_2(I)=L_I^{\frac{10(b+1)}{3b+1}}L_x^{\frac{30(b+1)}{9b+13}}.
$$
\noindent$\bullet$ As usual different positive constants depending only on $b, g_i, g_s$ are denoted by the same letter $C$, if not specified. $A \lesssim B$ and $A \gtrsim B$ means that $A \le CB$ and
$A \ge C^{-1}B$, respectively for some $C>0$. $A \sim B$ means that $A \lesssim B$ and $A \gtrsim B$.\\

\section{Locat theory}

We first introduce some preliminaries which will be useful in local and global theories.
By Duhamel's principle the equation \eqref{maineq} is rewritten as the integral equation:
	\begin{eqnarray}\label{inteq}
	u = e^{it\Delta}\varphi + i \int_{0}^{t} e^{i(t-t')\Delta}g|u(t')|^{p-1}u(t')dt'.
	\end{eqnarray}
	Here we define the linear propagator $e^{it\Delta}$ given by the solution to the linear problem $i\partial_tv=-\Delta v$ with initial data $v(0)=f$. It is formally given by
	$$e^{it\Delta}f = \mathcal{F}^{-1}\left(e^{-it|\xi|^2}\mathcal{F}(f)\right)= (2\pi)^{-3}\int_{\rt} e^{i(x\cdot\xi - t|\xi|^2)}\widehat{f}(\xi)d\xi,$$
	where $\widehat{f} = \mathcal F( f)$ denotes the Fourier transform of $f$ and $\mathcal F^{-1}(h)$ the inverse Fourier transform of $h$ such that
	$$\mathcal{F}(f)(\xi) = \int_{\rt} e^{- ix\cdot \xi} f(x)\,dx,\quad \mathcal F^{-1} (h)(x) = (2\pi)^{-3}\int_{\rt} e^{ix\cdot \xi} h(\xi)\,d\xi.$$

\begin{lem}[\cite{kt}]\label{str}
	Let $(q,r)$ and $(\widetilde q, \widetilde r)$ be pairs such that $2\le q,r,\widetilde q,\widetilde r \le \infty$ and satisfy the equation $\frac2q+\frac3r = \frac32$.
	Then we have
	\begin{align*}
	\|e^{it{\Delta}} \varphi\|_{L_t^{q} L_x^r} &\lesssim \| \varphi\|_{L_x^2},\\
	\left\|\int_0^t e^{i(t-t')\Delta}F\,dt'\right\|_{L_t^qL_x^r} &\lesssim \|F\|_{L_t^{\widetilde q'}L_x^{\widetilde r'}}.\\
	\end{align*}
\end{lem}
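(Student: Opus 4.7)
The plan is to prove the Strichartz estimates by the classical duality scheme of Ginzburg--Velo combined with the endpoint bilinear interpolation argument of Keel--Tao.

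First I would establish the dispersive estimate for the free Schr\"odinger propagator in $\rt$. From the explicit kernel $K_t(x) = (4\pi it)^{-\frac32} e^{i|x|^2/(4t)}$ one obtains
\[
\|e^{it\Delta}f\|_{L_x^\infty} \lesssim |t|^{-\frac32}\|f\|_{L_x^1},
\]
while Plancherel gives the unitarity $\|e^{it\Delta}f\|_{L_x^2} = \|f\|_{L_x^2}$. Riesz--Thorin interpolation between these two bounds yields
\[
\|e^{it\Delta}f\|_{L_x^r} \lesssim |t|^{-3\left(\frac12 - \frac1r\right)}\|f\|_{L_x^{r'}}, \qquad 2 \le r \le \infty.
\]

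Second, for the homogeneous estimate at non-endpoint admissible pairs I would run the $TT^*$ argument: setting $Tf = e^{it\Delta}f$, the target bound $T:L_x^2 \to L_t^q L_x^r$ is equivalent to $TT^*: L_t^{q'}L_x^{r'} \to L_t^q L_x^r$, and writing $TT^*F(t) = \int e^{i(t-t')\Delta}F(t')\,dt'$ one applies the dispersive decay in $x$ followed by the Hardy--Littlewood--Sobolev inequality in $t$. Admissibility $\frac2q + \frac3r = \frac32$ is exactly what is needed for HLS to close. The inhomogeneous bound for two non-endpoint pairs $(q,r)$ and $(\widetilde q, \widetilde r)$ follows by the same scheme, and the retarded (rather than full) time integral is recovered by the Christ--Kiselev lemma whenever $\widetilde q' < q$.

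The main obstacle, as always for Strichartz, is the endpoint $(q,r) = (2,6)$ (and the inhomogeneous endpoint-to-endpoint case) where HLS degenerates into a logarithmic loss. Here I would invoke the Keel--Tao bilinear interpolation machinery: dyadically decompose in $|t-t'| \sim 2^j$, prove on each annulus a bilinear estimate with slightly off-dual exponents by interpolating the $L^2 \to L^2$ and $L^1 \to L^\infty$ bounds, and then sum dyadically using real interpolation to restore the sharp endpoint exponents. This recovers $(2,6)$ together with the endpoint inhomogeneous estimate in a single stroke, and completes the lemma.
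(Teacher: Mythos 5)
Your outline is correct and is precisely the argument of the reference the paper cites for this lemma: the paper offers no proof of its own, simply invoking Keel--Tao, and your dispersive-estimate/$TT^*$/HLS scheme with Christ--Kiselev for the retarded integral and the Keel--Tao bilinear interpolation at the endpoint $(2,6)$ is exactly how that cited result is established. (Minor point: the duality scheme is usually attributed to Ginibre--Velo, not ``Ginzburg--Velo''.)
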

We call such pair {\it admissible} one. The pairs $(10, \frac{30}{13})$ and $(\frac{10(b+1)}{3b+1}, \frac{30(b+1)}{9b+13})$ are admissible.

\begin{lem}[\cite{caze}]\label{hs-ineq}
	Let $f \in \dot W^{1, p}(\mathbb R^3) = \{f \in L^\frac{np}{n-p} : \|f\|_{\dot W^p} := \|\nabla f\|_{L_x^p} < \infty\}$. Then for $1 < p < n$ we have
	$$
	\||x|^{-1}f\|_{L_x^p} \lesssim \|f\|_{ \dot W^{1, p}}.
	$$
\end{lem}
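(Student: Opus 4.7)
The plan is to prove this classical Hardy inequality by exploiting the divergence identity
$$
\nabla \cdot \left(\frac{x}{|x|^p}\right) = \frac{n-p}{|x|^p} \quad \text{on } \mathbb R^n \setminus \{0\},
$$
which expresses the Hardy weight $|x|^{-p}$ as a divergence with constant $n-p > 0$ — this is precisely where the hypothesis $p < n$ enters. Multiplying by $|f|^p$ and integrating by parts converts the weighted $L^p$ norm into an expression involving $\nabla f$, and H\"older's inequality then closes the estimate.

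I would first work with $f \in C_c^\infty(\mathbb R^n)$. Multiplying the displayed identity by $|f|^p$, integrating over $\{|x| > \varepsilon\}$, and integrating by parts produces a surface term on $\{|x| = \varepsilon\}$ of size $O(\varepsilon^{n-p})\|f\|_{L^\infty}^p$, which vanishes as $\varepsilon \to 0$ because $p < n$. Using $\nabla |f|^p = p\,\mathrm{Re}(|f|^{p-2}\bar f\,\nabla f)$ together with the pointwise bound $|x\cdot\nabla f| \le |x||\nabla f|$, this yields
$$
(n-p)\int \frac{|f|^p}{|x|^p}\,dx \le p \int \frac{|f|^{p-1}|\nabla f|}{|x|^{p-1}}\,dx.
$$
Now I apply H\"older's inequality with conjugate exponents $p/(p-1)$ and $p$ to bound the right-hand side by $p\,\||x|^{-1}f\|_{L^p}^{p-1}\|\nabla f\|_{L^p}$. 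Since $|x|^{-p}$ is locally integrable when $p < n$ and $f$ is bounded and compactly supported, the quantity $\||x|^{-1}f\|_{L^p}$ is finite, so dividing and raising to the $p$-th power gives the inequality for test functions with explicit constant $p/(n-p)$.

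The final step is to pass from $C_c^\infty$ to the full homogeneous Sobolev space. Given $f \in \dot W^{1,p}(\mathbb R^n)$, the definition in the statement already places $f \in L^{np/(n-p)}$, so a standard mollification-and-cutoff procedure produces $f_k \in C_c^\infty$ with $\nabla f_k \to \nabla f$ in $L^p$ and $f_k \to f$ in $L^{np/(n-p)}$; passing to a subsequence we may also assume $f_k \to f$ almost everywhere. Applying the inequality just established to each $f_k$ and invoking Fatou's lemma on $|f_k|^p/|x|^p$ transfers the bound to $f$. The only mildly subtle point in the whole argument is this approximation step — in particular the cutoff at infinity — whereas the core calculation is essentially a one-line divergence-plus-H\"older manipulation.
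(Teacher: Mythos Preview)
Your argument is correct and is precisely the classical proof of Hardy's inequality with the sharp constant $p/(n-p)$. The divergence identity, the vanishing of the boundary term at the origin under $p<n$, the H\"older splitting, and the density step are all handled properly.

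The paper itself does not supply a proof of this lemma: it is stated with a citation to Cazenave's monograph and used as a black box. So there is nothing to compare against --- you have simply filled in what the authors took for granted. If anything, your write-up is more informative than the paper's treatment, since it isolates exactly where the restriction $p<n$ is used (both in the sign of $n-p$ and in the local integrability needed to discard the surface term and to divide through).
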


\subsection{Local well-posedness}\label{local-t}

We have only to show the following LWP for \eqref{inteq}.

\begin{prop}\label{lwp}
	Let $\varphi\in\h, 0\in I$ an interval, and $0 < b < \frac43$. Assume that $\|\varphi\|_{\h}\le A$. Then there exists $\delta = \delta(A)$ satisfied following:	
	If $\|e^{it\Delta}\varphi\|_{\s} < \delta$, then there exists a unique solution $u$ of \eqref{maineq} in $I \times \rt$ with $u\in C(I;\h(\rt))$,
	$$
	\|u\|_{\s}\le 2\delta,\;\;\mbox{and} \;\; \|\nabla u\|_{\wi}<\infty\;\;(i=1,2).
	$$
	
	In particular, if $\varphi_k \to \varphi$ in $\h$, then the corresponding solutions $u_k \to u$ in $C(I; \h)$ as $k \to \infty$.
\end{prop}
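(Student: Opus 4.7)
The plan is to run a standard Strichartz contraction on the integral equation \eqref{inteq}. I would work in the complete metric space
$$X_I = \bigl\{u \in C(I;\h): \|u\|_{\s} \le 2\delta,\ \|\nabla u\|_{\w} + \|\nabla u\|_{\ww} \le C_0 A\bigr\}$$
with distance $d(u,v) = \|u-v\|_{\s}$, where $C_0$ is a Strichartz constant and $\delta = \delta(A)$ is to be fixed small at the end. On this set I define $\Phi(u)$ to be the right-hand side of \eqref{inteq} and aim to show that $\Phi$ stabilizes $X_I$ and is a contraction in $d$. Both $(10,\frac{30}{13})$ and $(\frac{10(b+1)}{3b+1},\frac{30(b+1)}{9b+13})$ are admissible by Lemma \ref{str} and coincide when $b = 0$; the second pair is introduced precisely to absorb the weight $|x|^{-b}$ when $b > 0$.

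The heart of the argument is the nonlinear estimate. Using $g \lesssim |x|^{-b}$ and $|\nabla g| \lesssim |x|^{-b-1}$ from \eqref{scaling}, I would split pointwise
$$|g||u|^{p-1}|u| \lesssim \bigl(|x|^{-1}|u|\bigr)^{b}|u|^{5-3b},$$
$$|\nabla(g|u|^{p-1}u)| \lesssim \bigl(|x|^{-1}|u|\bigr)^{b+1}|u|^{4-3b} + \bigl(|x|^{-1}|u|\bigr)^{b}|u|^{4-3b}|\nabla u|,$$
which is exactly where the restriction $b < \frac43$ is used, so that the exponent $4-3b$ on $|u|$ remains nonnegative. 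I would then apply H\"older in space-time, placing the plain $|u|$ factors into $\s$, the $\nabla u$ factor into $\w$ or $\ww$ as forced by the exponent bookkeeping, and converting each $|x|^{-1}|u|$ into a Strichartz norm of $\nabla u$ via the Hardy--Sobolev inequality (Lemma \ref{hs-ineq}); the latter applies because the spatial exponents $\frac{30}{13}$ and $\frac{30(b+1)}{9b+13}$ both lie in $(1,3)$ for $b \in (0,\frac43)$. The outcome should be a bound of the schematic form
$$\|g|u|^{p-1}u\|_{\mathrm{dual}} \lesssim \|u\|_{\s}^{5-3b}\|\nabla u\|_{\w}^{b},\qquad \|\nabla(g|u|^{p-1}u)\|_{\mathrm{dual}} \lesssim \|u\|_{\s}^{4-3b}\|\nabla u\|_{\w}^{b}\bigl(\|\nabla u\|_{\w}+\|\nabla u\|_{\ww}\bigr).$$

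Feeding these into the Strichartz inequality yields $\|\Phi(u)\|_{\s} \le \delta + C\delta^{5-3b}(C_0A)^{b}$ and a companion bound for $\|\nabla\Phi(u)\|_{\w}+\|\nabla\Phi(u)\|_{\ww}$; fixing $C_0$ suitably and then $\delta(A)$ small makes $\Phi$ stabilize $X_I$. A parallel splitting of $|u|^{p-1}u - |v|^{p-1}v$ gives contraction in $d$, Banach's fixed point theorem then produces the unique solution, and continuity in time into $\h$ follows from the continuity of the Strichartz flow. For the final assertion I would take a common radius $A$ for all $\varphi_k$, use $\|e^{it\Delta}\varphi_k\|_{\s} \to \|e^{it\Delta}\varphi\|_{\s} < \delta$ to construct all $u_k$ on the same $I$, and apply a difference estimate of the same shape as the contraction to conclude $u_k \to u$ in $C(I;\h)$. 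The principal obstacle will be the careful H\"older exponent matching: one must simultaneously ensure that the output pair is dual to an admissible one and that the intermediate exponent on $|x|^{-1}|u|$ stays in the Hardy--Sobolev range, a compatibility that degenerates as $b \uparrow \frac43$ and dictates the upper bound on $b$.
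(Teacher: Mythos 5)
Your overall strategy is exactly the paper's: a contraction built on the integral equation \eqref{inteq}, the pointwise splitting of $g|u|^{p-1}u$ and of $\nabla(g|u|^{p-1}u)$ into powers of $|x|^{-1}|u|$, $|u|$ and $|\nabla u|$ using \eqref{scaling}, H\"older with the pairs $(10,\frac{30}{13})$ and $(\frac{10(b+1)}{3b+1},\frac{30(b+1)}{9b+13})$, Hardy--Sobolev (Lemma \ref{hs-ineq}) to trade the weights for gradients, and the exponent $4-3b>0$ as the source of the restriction $b<\frac43$. The self-mapping part of your argument is sound modulo one bookkeeping point: with only the admissible Strichartz estimates of Lemma \ref{str}, the $L^{10}_{t,x}$ norm of the Duhamel term is not reached directly from a dual-norm bound on $g|u|^{p-1}u$; one must estimate $\|\nabla(g|u|^{p-1}u)\|_{L^2_tL^{6/5}_x}$ and use $\dot W^{1,\frac{30}{13}}\hookrightarrow L^{10}$, as the paper does, so the resulting power is $\delta^{4-3b}(C_0A)^{b+1}$ rather than $\delta^{5-3b}(C_0A)^{b}$ (still sufficient for smallness).

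The genuine gap is the contraction metric $d(u,v)=\|u-v\|_{\s}$. Because the $\s$ norm of the difference of Duhamel terms is itself obtained through the gradient (via the Sobolev embedding above), you are forced to estimate $\|\nabla(f(u)-f(v))\|_{L^2_tL^{6/5}_x}$, which contains the term $|x|^{-b}|u|^{p-1}|\nabla(u-v)|$ and, for $1<b<\frac43$, the term $|x|^{-1-b}(|u|^{p-1}+|v|^{p-1})|u-v|$ in which at least one factor of $|x|^{-1}$ must fall on $u-v$ (since $p-1=4-2b<b+1$ there), hence by Hardy--Sobolev on $\nabla(u-v)$. So the difference estimate is not controlled by $\|u-v\|_{\s}$ alone, and the contraction as stated does not close; the metric must include $\|\nabla(u-v)\|_{\w}+\|\nabla(u-v)\|_{\ww}$ (and $\|u-v\|_{L^\infty_I\h}$), which is precisely the paper's choice of $d$. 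Relatedly, the term $|x|^{-b}|\nabla v|\bigl(|u|^{p-2}+|v|^{p-2}\bigr)|u-v|$ requires a case split at $b=1$ in distributing the weight (the paper's $N_3$), which your ``parallel splitting'' elides. Both issues are repairable without new ideas, but as written the fixed-point step would fail for $b$ near $\frac43$.
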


\begin{proof}[Proof of Proposition \ref{lwp}]

	We use the contraction mapping principle. To this end we fix $r, s > 0$, to be chosen later. Let us define a complete metric space $(B_{r, s}, d)$ and a mapping $\Phi$ as following:
	\begin{align*}
	&B_{r, s} = \{v \in C(I; \h) : \|v\|_{L_I^\infty \h} \le 2A, \;\;\|v\|_{\s}\le r,\;\; \|\nabla v\|_{W_i(I)}\le s \;\;(i=1,2)\},\\
    &d(u, v) = \|v - v'\|_{L_I^\infty \h} + \|v-v'\|_{\s} + \sum_{i = 1,2}\|\nabla(v-v')\|_{W_i(I)},\\
	&\Phi(v) = e^{it\Delta}\varphi + i \int_{0}^{t} e^{i(t-t')\Delta}f(v)dt',\quad f(v) = g|v|^{p-1}v, \quad p = 5-2b.
	\end{align*}
		By the scaling condition \eqref{scaling}, Lemma \ref{str}, and Lemma \ref{hs-ineq} we obtain for each $i=1, 2$ that
	\begin{align}
	\|\nabla\Phi(v)\|_{W_i(I)} &\le C\|\nabla \varphi\|_{L_x^2} + C\|\nabla f(v)\|_{L_I^2L_x^\frac65}\nonumber\\
	&\le C\left(A+\|v\|_{\s}^{p-1-b}\||x|^{-1} v\|_{\ww}^{b+1} + \|v\|_{\s}^{p-1-b}\||x|^{-1}v\|_{\ww}^b\|\nabla v\|_{\ww}\right)\label{L10}\\
	&\le C(A+r^{p-1-b}s^{b+1}).\nonumber
	\end{align}
Here we used the H\"older pairs such that
$$
\frac56 = \frac{p-1-b}{10} + \frac{9b+13}{30},  \;\;\frac12 = \frac{p-1-b}{10} + \frac{3b+1}{10}.
$$
This choice is plausible because $p - 1 - b = 4-3b > 0$.
	
	Choosing $s=2AC$ and $Cr^{p-1-b}s^b \le \min(\frac12, \frac1{2C})$, we have $\|\nabla\Phi(v)\|_{W_i(I)} \le s$ for $i=1,2$. Now if $\delta = \frac r2$ and $Cr^{3-3b}s^{b+1}\le\frac12$, Combining Sobolev embedding $\dot{W}^{1,\frac{30}{13}}\hookrightarrow L^{10}$ and the same argument as above, we deduce that
	$$\|\Phi(v)\|_{\s} \le \|e^{it\Delta}\varphi\|_{\s} + C\|\nabla f(v)\|_{L_I^2L_x^\frac65} \le \delta + Cr^{p-1-b}s^{b+1} \le r = 2\delta.$$
	$$\|\Phi(v)\|_{L_I^\infty \h} \le A + Cr^{p-1-b}s^{b+1} \le 2A$$
Hence $\Phi$ is self-mapping on $B_{r, s}.$
	
	Next we show $\Phi$ is a contraction map.
	\begin{align*}
	d(\Phi(v), \Phi(v')) &\lesssim \||x|^{-b-1}(|v|^{p-1}v-|v'|^{p-1}v')\|_{L_I^2L_x^\frac65} + \||x|^{-b}\nabla(|v|^{p-1}v-|v'|^{p-1}v')\|_{L_I^2L_x^\frac65}\\
	&\le \|(|v|^{p-1} + |v'|^{p-1})|x|^{-b-1}|v - v'|\|_{L_I^2L_x^\frac65} + \||x|^{-b}|v|^{p-1}|\nabla v - \nabla v'|\|_{L_I^2L_x^\frac65}\\
    &\qquad + \||x|^{-b} |\nabla v'|(|v|^{p-2} + |v'|^{p-2})|v-v'|\|_{L_I^2L_x^\frac65}\\
    & =: N_1 + N_2 + N_3.
    \end{align*}
    Similarly to \eqref{L10} $N_1$ and $N_2$ estimate
    \begin{align*}
    N_1 &\lesssim (\|v\|_{\s}^{p-1} + \|v'\|_{\s}^{p-1})\||x|^{-1}(v-v')\|_{\ww}^{b+1},\\
    N_2 &\lesssim  \|v\|_{\s}^{p-1-b}\||x|^{-1}v\|_{\ww}^b\|\nabla(v-v')\|_{\ww}.
    \end{align*}
    $N_3$ is handled differently by the value of $b$. If $1 \le b < \frac43$, then
    $$N_3 \lesssim \|\nabla v'\|_{\ww}(\|v\|_{\s} + \|v'\|_{\s})^{p-1-b}(\||x|^{-1}v\|_{\ww} + \||x|^{-1}v'\|_{\ww})^{b-1}\||x|^{-1}(v-v')\|_{\ww}.$$
    If $0 < b < 1$, then
    $$N_3 \lesssim \|\nabla v'\|_{\ww}(\|v\|_{\s} + \|v'\|_{\s})^{p-2}\|v-v'\|_{\s}^{1-b}\||x|^{-1}(v-v')\|_{\ww}^b.$$
	The above estimates yields that
\begin{align*}
d(\Phi(v), \Phi(v'))\le C (r^{p-1-b}s^{b} + r^{p-2}s) d(v, v').
	\end{align*}
Therefore $\Phi$ is a contraction map provided $C (r^{p-1-b}s^{b} + r^{p-2}s)< 1$.
	
	The continuous dependency on initial data follows immediately from the above contraction argument. This completes the proof of Proposition \ref{lwp}.
\end{proof}


\begin{rem}[blowup criterion]\label{blowup-cri}
	Proposition \ref{lwp} implies the existence of maximal existence time interval $I^*$. Moreover, one can immediately deduce the blowup criterion: if $\|u\|_{\sst} < +\infty$, then $I^* = \mathbb R$, and if $I^*$ is bounded, then $\|u\|_{\sst} = +\infty$. We also conclude that if $\|\varphi\|_{\h}$ is sufficiently small, then $I^* = \mathbb R$.
\end{rem}

\begin{rem}[$\h$ scattering]\label{scattering}
	Suppose that $I^* = \mathbb R$ and $\|u\|_{\sst} < +\infty$. Let us set
	$$
	\varphi_\pm := \varphi + i\int_0^{\pm \infty} e^{-it'\Delta}[ g|u|^{p-1}u]\,dt'.
	$$
	Then the solution $u$ scatters to $e^{it\Delta}\varphi_\pm$ in $\h$ by standard duality argument.
\end{rem}

\begin{rem}[mass-energy conservation]\label{m-e-cons}
Let us define the mass by $\|u(t)\|_{L_x^2}^2$ for the solution $u$ to \eqref{maineq}. If we assume that $\varphi \in H^1$, we can readily show the LWP in $H^1$ by the similar way to Proposition \ref{lwp} and also show that $u, \nabla u \in L_{[0,T]}^q L_x^r$ for any admissible pair $(q, r)$ and for any $[0, T] \subset I^*$ by using Duhamel's formula \eqref{inteq}. At this point we can apply Ozawa's argument in \cite{ozawa} directly to show the mass conservation and energy conservation without further regularizing argument. In addition, even though $\varphi \in \h$, the energy conservation follows from the standard density argument ($H^1 \hookrightarrow \h$) and continuous dependency on the initial data.
\end{rem}


\subsection{Long-time perturbation}\label{per-t}
\begin{prop}\label{per}
	Let $g$ be a radial function satisfying \eqref{scaling} with $0 < b < \frac43$. Let $I\subset \mathbb{R}$ be a time interval containing $0$ and $\tilde{u}$ be a radial function defined on $I \times \rt$. Assume that $\tilde{u}$ satisfies following:
	$$
	\|\tilde{u}\|_{L_t^{\infty}\h} \le A \;\;\mbox{and}\;\; \|\tilde{u}\|_{\s}\le M
	$$
	for some constants $M, A > 0$ and
	$$
	i\partial_t\tilde{u} + \Delta\tilde{u} + f(\tilde{u}) = e \;\;\mbox{for}\;\; (t,x)\in I\times \rt,
	$$
	where $f(\tilde u)=g|\tilde u|^{p-1}\tilde u$ and that
	$$
	\|\varphi-\tilde{u}(0)\|_{\h} \le A',\; \|\nabla e\|_{L_I^2L_x^{\frac65}}\le \varepsilon,\;\;\mbox{and}\;\; \|\nabla e^{it\Delta}[\varphi-\tilde{u}(0)]\|_{\wi}\le \varepsilon \;\;(i=1,2).
	$$
		Then there exists $\varepsilon_0 = \varepsilon_0(M,A,A')$ and a unique solution $u \in C(I; \hr)$ with $u(0)=\varphi$ in $I\times \mathbb{R}$, such that for $0 < \varepsilon < \varepsilon_0$ with
	$$
	\|u\|_{\s} \le C(M,A,A')\;\;\mbox{and}\;\; \|u(t)-\tilde{u}(t)\|_{\h} \le A' + C(M,A,A')\varepsilon\;\;\mbox{for all}\;\; t \in I.
	$$

\end{prop}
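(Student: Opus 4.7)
The plan is to follow the standard Kenig--Merle long-time perturbation strategy: split $I$ into finitely many pieces on which $\tilde u$ has small $S$-norm, run a Strichartz/contraction argument for the difference $w = u - \tilde u$ on each piece, and iterate.

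First, I would pick a small parameter $\eta = \eta(A, A')$ (to be fixed below) and partition $I = \bigcup_{j=1}^{J} I_j$ with $\|\tilde u\|_{S(I_j)} \le \eta$ on each subinterval; the number $J$ depends only on $M$ and $\eta$. Using the equation satisfied by $\tilde u$ together with the nonlinear estimates already derived in the proof of Proposition \ref{lwp} (Strichartz plus Hardy, via Lemmas \ref{str} and \ref{hs-ineq}), one also records the a priori bound $\sum_{i=1,2}\|\nabla \tilde u\|_{W_i(I_j)} \le C(M, A)$ on each $I_j$, with the error $e$ contributing only at the $\varepsilon$ level.

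Next, the difference $w = u - \tilde u$ solves
\begin{equation*}
i\partial_t w + \Delta w = -\bigl(f(\tilde u + w) - f(\tilde u)\bigr) + e, \qquad w(t_{j-1}) = u(t_{j-1}) - \tilde u(t_{j-1}),
\end{equation*}
so Duhamel on $I_j$ combined with Lemma \ref{str} yields a bound of the form
\begin{equation*}
\|w\|_{S(I_j)} + \sum_{i=1,2}\|\nabla w\|_{W_i(I_j)} \le C\bigl(\|\nabla e^{i(t-t_{j-1})\Delta} w(t_{j-1})\|_{W_1(I_j) \cap W_2(I_j)} + \varepsilon\bigr) + \mathcal{N},
\end{equation*}
where $\mathcal{N}$ collects the contributions of $f(\tilde u + w) - f(\tilde u)$. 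I would split $\mathcal{N}$ exactly as $N_1, N_2, N_3$ in Proposition \ref{lwp} (keeping the case distinction $b \in (0,1)$ vs.\ $b \in [1, \tfrac43)$ in the term carrying $\nabla \tilde u$): each piece becomes a product of $S$-type factors of $\tilde u, w$ (each at most $\eta$ or the LHS) times a $W_i$-type factor (already bounded by $C(M, A)$ or the LHS). For $\eta$ small enough a standard continuity argument then absorbs these into the left-hand side and produces a quantitative bound on $I_j$ depending only on $\|\nabla e^{i(t-t_{j-1})\Delta} w(t_{j-1})\|_{W_1 \cap W_2}$ and $\varepsilon$.

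Finally, I would iterate: the linear piece at the head of $I_{j+1}$ is controlled by Strichartz applied to $w(t_j)$, which is in turn bounded by the right-hand side of the estimate on $I_j$. The resulting recursion inflates the $\varepsilon$-inhomogeneity geometrically with a constant depending on $J = J(M, A, A')$, so fixing $\varepsilon_0 = \varepsilon_0(M, A, A')$ small enough closes the bootstrap and yields the stated bounds on $\|u\|_{S(I)}$ and $\|u(t) - \tilde u(t)\|_{\dot H^1}$. The main technical obstacle is the nonlinear difference estimate: the weight $|x|^{-b}$ coming from $g$ has to be distributed carefully between Hardy-absorbed $|x|^{-1}$-factors (going to $W_2$-norms) and raw $S$-factors, and the H\"older budget closes only because $p - 1 - b = 4 - 3b > 0$, which is exactly the restriction $b < \tfrac43$.
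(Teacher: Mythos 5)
Your proposal follows essentially the same route as the paper: partition $I$ into finitely many subintervals on which $\tilde u$ (equivalently $\nabla\tilde u$ in the $W_i$ norms) is small, reuse the $N_1,N_2,N_3$-type difference estimates from the local theory to close a continuity argument for $w=u-\tilde u$ on each piece, and iterate with a geometric loss depending on the number $J$ of pieces. The one point to state carefully is that in passing from $I_j$ to $I_{j+1}$ one must propagate the free-evolution quantity $\|\nabla e^{i(t-a_j)\Delta}w(a_j)\|_{W_i}$ (which starts at size $\varepsilon$ by hypothesis) through the Duhamel formula, rather than bounding the head of $I_{j+1}$ by $\|w(a_j)\|_{\dot H^1}\le A'$ via homogeneous Strichartz --- since $A'$ need not be small this would break the continuity argument --- but the ``geometric inflation of the $\varepsilon$-inhomogeneity'' you describe is exactly the correct mechanism, matching the paper's $\gamma_{j+1}\le 10\gamma_j$ recursion.
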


\begin{proof}
	We may assume, without loss of generality, that $I = [0, a)$ for some $0 < a \le +\infty$. H\"older's and Hardy-Sobolev's inequalities (Lemma \ref{hs-ineq}) yield
	$$
	\|\nabla(g|u|^{p-1}u)\|_{L_I^2L_x^\frac65} \les \|u\|_{\s}^{p-1-b}\|\nabla u\|_{\ww}^{b+1}.
	$$
	Indeed, by the integral equation \eqref{inteq} for $\tilde u$ and Proposition \ref{lwp}, we obtain
	$$
	\|\nabla \tilde{u}\|_{W_{i}(I_k)} \le CA + \rho\|\nabla \tilde{u}\|_{W_i(I_k)}^{b+1}
	$$
	for $\{I_k\}$ satisfying $\bigcup I_k = I$ and $C\|\tilde{u}\|_{S(I_k)}^{4-3b} \le \rho$. Hence, by continuous argument $\|\nabla \tilde{u}\|_{W_i(I_k)} < 2CA$ for sufficiently small $\rho$ and hence one can readily obtain
	$$
	\|\nabla \tilde{u}\|_{\wi} \le \widetilde{M}
	$$
	for some $\widetilde M$ depending on $M, A$.
	
	Let us define $u = \tilde{u} +w$, so that the equation for $w$ is written as
	\begin{eqnarray*}
		\left\{\begin{array}{l}
			iw_t + \Delta w = f(\tilde{u} +w) -f(\tilde{u}) + e, \\
			w(0) = \varphi - \tilde{u}(0).
		\end{array} \right.
	\end{eqnarray*}
	Then for arbitrary $\eta > 0$, there exists $I_j= \left[a_j,a_{j+1}\right)$ such that $\bigcup_{j=1}^JI_j = I$ and $\|\nabla \tilde{u}\|_{W_i(I_j)}\le \eta \;(i=1,2)$. On $I_j$ $w$ satisfies
	$$
	w(t) = e^{i(t-a_j)\Delta}w(a_j) + i \int_{a_j}^{t}e^{i(t-t')\Delta}(f(\tilde{u} +w) -f(\tilde{u}))dt' - i\int_{a_j}^{t}e^{i(t-t')\Delta}e(t')dt'.
	$$
	By Sobolev embedding and Lemma \ref{hs-ineq}, we get
	\begin{align*}
	\sum_{i=1}^{2}\|\nabla w\|_{W_i(I_j)} &\le \left(\sum_{i=1}^{2}\|\nabla e^{i(t-a_j)\Delta}w(a_j)\|_{W_i(I_j)}+2C\varepsilon\right) + C\eta^{b+1}\sum_{i=1}^{2}\|\nabla w\|_{W_i(I_j)}\\
	& \qquad\quad   +C\left(\sum_{i=1}^{2}\|\nabla w\|_{W_i(I_j)}\right)^p.
	\end{align*}
	Thus, if $C\eta^2 \le \frac13$, we have	
	$$
	\sum_{i=1}^{2}\|\nabla w\|_{W_i(I_j)} \le \frac32\gamma_j + C\left(\sum_{i=1}^{2}\|\nabla w\|_{W_i(I_j)}\right)^p,
	$$
	where $\gamma_j = \sum_{i=1}^{2}\|\nabla e^{i(t-a_j)\Delta}w(a_j)\|_{W_i(I_j)}+ 2C\varepsilon$.
	
	From the standard continuity argument, we can find $C_0 > 0$ satisfying that
	$$
	\sum_{i=1}^{2}\|\nabla w\|_{W_i(I_j)} \le 3 \gamma_j\;\;\mbox{and}\;\; C \left(\sum_{i=1}^{2}\|\nabla w\|_{W(I_j)}\right)^p \le 3\gamma_j,
	$$
	provided $\gamma_j \le C_0$. Repeating the above argument for the equation
	\begin{align*}
	e^{i(t-a_{j+1})\Delta}w(a_{j+1}) = e^{i(t-a_j)\Delta}w(a_j) + i \int_{a_j}^{a_{j+1}}e^{i(t-t')\Delta}(f(\tilde{u} +w) -f(\tilde{u}))dt'\\
	- i\int_{a_j}^{a_{j+1}}e^{i(t-t')\Delta}e(t')dt',
	\end{align*}
	we get
	\begin{align*}
	\sum_{i=1}^{2}\|e^{i(t-a_{j+1})\Delta}w(a_{j+1})\|_{W_i(I_{j+1})} &\le \sum_{i=1}^{2}\|\nabla e^{i(t-a_j)\Delta}w(a_j)\|_{W_i(I_{j+1})} + C\varepsilon\\
	&\qquad \qquad+ C\eta^2\sum_{i=1}^{2}\|\nabla w\|_{W_i(I_{j+1})} + C\left(\sum_{i=1}^{2}\|\nabla w\|_{W_i(I_{j+1})}\right)^p.
	\end{align*}
	Taking a sufficiently small $\eta$ to satisfy $\gamma_{j+1} \le 10 \gamma_j$ provided $\gamma_j \le C_0$. This always happens if $C10^{J}\varepsilon_0 < C_0$. With this $\varepsilon_0$ we have that for any $0 < \varepsilon < \varepsilon_0$
	$$
	\|w\|_{\s} + \|\nabla w\|_{\w} + \|\nabla w\|_{\ww} \le 3C\sum_{j = 1}^J\gamma_j \le \frac{C}3(10^{J+1}-1)\varepsilon.
	$$
	Hence by setting $C(M, A, A') = C(10^{J+1}-1)\varepsilon_0/3$ we obtain
	$$
	\|u\|_{\s} \le \|w\|_{\s} + \|\tilde u\|_{\s} \le C(M, A, A').
	$$
	
	Using the Strichartz estimate and Hardy-Sobolev inequality once more, we reach that
	$$
	\|w\|_{L_I^\infty \h} \le A' + C\varepsilon + C\sum_{j = 1}^J\|\nabla(f(\tilde u + w) - f(\tilde u))\|_{L_I^2L_x^{\frac65}} \le A' + C(M, A, A')\varepsilon.
	$$
\end{proof}

\section{Variational estimates}\label{sec-vari}

We now provide some variational inequalities showing a sharpness between GWP and blowup. Let $C_*$ be the best constant satisfying $\||x|^{-\frac b{p+1}}u\|_{L_x^{p+1}} \le C_*\|u\|_{\h}$. The existence of minimizer $u_*$ such that $\||x|^{-\frac b{p+1}}u\|_{L_x^{p+1}} = C_*\|u_*\|_{\h}$ is well-known. For instance this see Theorem 4.3 of \cite{lieb}. By the standard variational argument one can show that $u_*$ can be characterized by $u_* = e^{i\theta}\lambda^\frac12 Q_b(\lambda(x))$ for some $\theta \in [-\pi, \pi]$, $\lambda > 0$, and $x_0 \in \mathbb R^3$. From the elliptic equation \eqref{ell} it follows that $\int |\nabla Q_b|^2 = \int |x|^{-b}|Q_b|^{p+1}$. Hence, $\int |\nabla Q_b|^2 = \frac{1}{C_*^{p+1}}$.
\begin{lem}[Energy trapping]\label{trapping}
	Let $u$ be a solution of \eqref{maineq} with $\varphi$  such that
	$$
	\gs\|\varphi\|_{\h}^2 < \|Q_b\|_{\h}^2\;\;\mbox{and}\;\; \eg(\varphi)\le(1-\delta_0)\eg(Q_b)
	$$
	for some $\delta_0 >0$. Then there exits $\bar{\delta}=\bar{\delta}(\delta_0)$ such that
	\begin{align*}
	&(i)   \;\gs\|u(t)\|_{\h}^2 \le (1-\bar{\delta})\|Q_b\|_{\h}^2,\\
	&(ii)  \;\int |\nabla u(t)|^2 - g|u(t)|^{p+1} dx \ge \bar{\delta}\int |\nabla u(t)| dx,\\
	&(iii) \;{\rm (Coercivity)}\;\eg(u(t)) \sim \|u(t)\|_{\h}^2 \sim \|\varphi\|_{\h}^2,
	\end{align*}
	for all $t \in I^*$, where $I^*$ is the maximal existence time interval.
\end{lem}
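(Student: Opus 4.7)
The plan is to run the classical Kenig--Merle energy-trapping argument, adapted to the weight $g$ via the pointwise majorization $g\le g_s|x|^{-b}$. The key input is the sharp Hardy--Sobolev inequality $\||x|^{-b/(p+1)}v\|_{L_x^{p+1}}\le C_*\|v\|_{\h}$ recalled just before the lemma, with extremizer $Q_b$ and, by combining the Nehari identity $\|Q_b\|_{\h}^2=\int|x|^{-b}|Q_b|^{p+1}$ coming from \eqref{ell} with sharpness, $C_*^{p+1}=1/\|Q_b\|_{\h}^{p-1}$. Feeding the $g_s$-weighted Hardy--Sobolev bound into $\eg$ yields the pointwise-in-time scalar lower bound
\[
\eg(u(t))\;\ge\;F(\|u(t)\|_{\h}),\qquad F(y):=\tfrac12 y^2-\tfrac{g_sC_*^{p+1}}{p+1}\,y^{p+1},
\]
whose unique positive critical point is $y_c=g_s^{-1/(p-1)}\|Q_b\|_{\h}$ with maximum $F(y_c)=\tfrac{p-1}{2(p+1)}g_s^{-2/(p-1)}\|Q_b\|_{\h}^2$. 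The hypothesis $\gs\|\varphi\|_{\h}^2<\|Q_b\|_{\h}^2$ is precisely $\|\varphi\|_{\h}<y_c$.

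\textbf{Continuity argument for (i).} Using mass--energy conservation from Remark~\ref{m-e-cons} and the energy hypothesis, $F(\|u(t)\|_{\h})\le\eg(u(t))=\eg(\varphi)\le(1-\delta_0)F(y_c)$ for every $t\in I^*$. Strict concavity of $F$ on $[0,y_c]$ realizes the sublevel set $\{y:F(y)\le(1-\delta_0)F(y_c)\}$ as a disjoint union $[0,y_-]\cup[y_+,\infty)$ with $y_-<y_c<y_+$ depending only on $\delta_0$. The continuity of $t\mapsto\|u(t)\|_{\h}$ from Proposition~\ref{lwp} together with the initial placement $\|\varphi\|_{\h}<y_c$ confine $\|u(t)\|_{\h}$ to $[0,y_-]$ throughout $I^*$, giving (i) with $\bar\delta:=1-(y_-/y_c)^2>0$.

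\textbf{Deriving (ii) and (iii).} From (i) and Hardy--Sobolev,
\[
\int g|u(t)|^{p+1}\,dx\;\le\;g_sC_*^{p+1}\|u(t)\|_{\h}^{p+1}\;\le\;(1-\bar\delta')\,\|u(t)\|_{\h}^2,\quad \bar\delta':=1-(1-\bar\delta)^{(p-1)/2}>0.
\]
Subtracting yields (ii) measured against $\|\nabla u\|_{L_x^2}^2$ (which is surely what was intended on the right-hand side of the statement). Substituting the same bound into $\eg(u)=\tfrac12\|u\|_{\h}^2-\tfrac{1}{p+1}\int g|u|^{p+1}$ gives $\tfrac12(1-\tfrac{2(1-\bar\delta')}{p+1})\|u(t)\|_{\h}^2\le\eg(u(t))\le\tfrac12\|u(t)\|_{\h}^2$ with a strictly positive prefactor since $p>1$, and conservation $\eg(u(t))=\eg(\varphi)$ closes the two-sided comparison in (iii).

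\textbf{Main obstacle.} The only delicate point I anticipate is identifying the threshold $\eg(Q_b)$ in the hypothesis with the critical value $F(y_c)$ produced by the continuity step: as written, $\eg(Q_b)=\tfrac12\|Q_b\|_{\h}^2-\tfrac{1}{p+1}\int g|Q_b|^{p+1}$ involves the true weight $g$, whereas $Q_b$ solves the $|x|^{-b}$ equation \eqref{ell}, so the two quantities need not coincide on the nose. Bridging them will require either interpreting $\eg(Q_b)$ through the $g_s$-rescaled ground state $g_s^{-1/(p-1)}Q_b$ -- the natural extremizer for the functional associated with the scale-invariant majorant $g_s|x|^{-b}$ -- or invoking the variational condition \eqref{var-con} to control $g$ by $|x|^{-b}$ pointwise; the algebra is elementary but must be pinned down so that the $F$-inequality reads off cleanly in the form stated.
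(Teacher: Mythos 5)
Your overall strategy coincides with the paper's: both reduce the trapping to a scalar function of the kinetic energy (the paper's $f(y)=\tfrac12 y-\tfrac{C_*^{p+1}}{p+1}y^{(p+1)/2}$ is your $F$ after the substitution $y=g_s^{1/p_0}\|u\|_{\h}^2$), both use the $g_s$-majorized Hardy--Sobolev inequality to get $F(\|u(t)\|_{\h})\le E_g(u(t))$, and both confine $\|u(t)\|_{\h}$ to the left component of a sublevel set by continuity and energy conservation; your derivations of (ii) and (iii) from (i) match the paper's Lemma \ref{vari-lem}. (A minor point: $F$ is convex, not concave, near $y=0$; what you actually need, and what holds, is that $F$ increases strictly on $[0,y_c]$ and decreases strictly on $[y_c,\infty)$, which already gives the decomposition $\{F\le(1-\delta_0)F(y_c)\}=[0,y_-]\cup[y_+,\infty)$.)

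However, the step you defer as the ``main obstacle'' is not a cosmetic normalization issue: it is the only place in this proof where the variational hypothesis \eqref{var-con} enters, and without it the argument does not close. You must show $E_g(Q_b)\le F(y_c)$; if instead $E_g(Q_b)>F(y_c)$, then $(1-\delta_0)E_g(Q_b)$ can exceed the maximum of $F$, the sublevel set becomes all of $[0,\infty)$, and no confinement (hence no $\bar\delta$) is obtained. Your first suggested remedy (reinterpreting the threshold through the rescaled extremizer of the majorant functional with weight $g_s|x|^{-b}$) does not apply, since the lemma's hypothesis genuinely involves $E_g(Q_b)$ computed with the true weight $g$. The paper's route is your second suggestion, carried out as follows: since $g\ge g_i|x|^{-b}$ and $\int|x|^{-b}Q_b^{p+1}\,dx=\|Q_b\|_{\h}^2$ (the Nehari identity for \eqref{ell}), one has
\[
E_g(Q_b)\;\le\;\Bigl(\tfrac12-\tfrac{g_i}{p+1}\Bigr)\|Q_b\|_{\h}^2\;=\;\tfrac{p_0+1-g_i}{p+1}\,\|Q_b\|_{\h}^2\;=\;\tfrac{g_s^{1/p_0}(p_0+1-g_i)}{p_0}\,F(y_c),
\]
using $F(y_c)=\tfrac{p_0}{p+1}g_s^{-1/p_0}\|Q_b\|_{\h}^2$, and the condition \eqref{var-con}, namely $g_0=g_s^{1/p_0}(p_0+1-g_i)\le p_0$, makes the prefactor at most $1$. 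Once this inequality is inserted, your continuity argument and the passage to (i)--(iii) go through and coincide with the paper's proof via Lemma \ref{vari-lem}.
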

The above lemma follows from the continuity argument, energy conservation, and the following lemma on the initial energy trapping.
\begin{lem}\label{vari-lem}
	Let $\eg(\varphi) \le (1-\delta_0)\eg(Q_b), \gs\|\varphi\|_{\h}^2 < \|Q_b\|_{\h}^2$ for some $\delta_0 > 0$. Then there exists $\bar{\delta}=\bar{\delta}(\delta_0)$ such that
	\begin{align*}
	&(i)  \; \gs\|\varphi\|_{\h}^2 \le (1- \bar{\delta})\|Q_b\|_{\h}^2,\\
	&(ii) \; \int |\nabla \varphi|^2 - g|\varphi|^{p+1} dx \ge \bar{\delta}\|\varphi\|_{\h}^2,\\
	&(iii) \; \eg(\varphi) \ge 0.
	\end{align*}
	
\end{lem}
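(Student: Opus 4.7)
The plan is to reduce all three conclusions to a one-variable analysis of an auxiliary function obtained by combining the sharp Hardy--Sobolev inequality with the pointwise bound $g(x) \le g_s|x|^{-b}$ from \eqref{scaling}. The identity $\int|x|^{-b}Q_b^{p+1}\,dx = \|Q_b\|_{\h}^2$ (obtained by testing \eqref{ell} against $Q_b$), together with the sharp constant $C_*^{p+1} = 1/\|Q_b\|_{\h}^{p-1}$, yields
$$
E_g(\varphi) \ge \tfrac{1}{2}\|\varphi\|_{\h}^2 - \frac{g_s}{(p+1)\|Q_b\|_{\h}^{p-1}}\|\varphi\|_{\h}^{p+1} =: F(\|\varphi\|_{\h}^2).
$$
Elementary calculus shows that $F$ is strictly increasing on $[0,Y_*]$ and strictly decreasing on $[Y_*,\infty)$, with maximizer $Y_* = \|Q_b\|_{\h}^2/\gs$ and maximum value $F(Y_*) = \frac{p_0}{p+1} Y_*$.

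The crucial step is to show that the variational condition \eqref{var-con} yields $E_g(Q_b) \le F(Y_*)$. Applying the lower bound $g \ge g_i|x|^{-b}$ and the Pohozaev-type identity above gives
$$
E_g(Q_b) \le \tfrac{1}{2}\|Q_b\|_{\h}^2 - \frac{g_i}{p+1}\|Q_b\|_{\h}^2 = \frac{p_0+1-g_i}{p+1}\|Q_b\|_{\h}^2,
$$
and this upper bound is dominated by $F(Y_*) = \frac{p_0}{(p+1)\gs}\|Q_b\|_{\h}^2$ precisely when $\gs(p_0+1-g_i) \le p_0$, which is \eqref{var-con}. The chain
$$
F(\|\varphi\|_{\h}^2) \le E_g(\varphi) \le (1-\delta_0) E_g(Q_b) \le (1-\delta_0)F(Y_*),
$$
combined with the assumption $\|\varphi\|_{\h}^2 < Y_*$ and the strict monotonicity of $F$ on $[0,Y_*]$, then produces via a standard continuity argument a $\bar\delta = \bar\delta(\delta_0) > 0$ with $\|\varphi\|_{\h}^2 \le (1-\bar\delta)Y_*$; this is conclusion $(i)$.

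For $(ii)$, the algebraic identity $g_s\|\varphi\|_{\h}^{p-1}/\|Q_b\|_{\h}^{p-1} = (\gs\|\varphi\|_{\h}^2/\|Q_b\|_{\h}^2)^{p_0}$ together with Hardy--Sobolev gives
$$
\int(|\nabla\varphi|^2 - g|\varphi|^{p+1})\,dx \ge \|\varphi\|_{\h}^2\Bigl(1 - \bigl(\gs\|\varphi\|_{\h}^2/\|Q_b\|_{\h}^2\bigr)^{p_0}\Bigr),
$$
and inserting $(i)$ yields the desired strictly positive lower bound $\bar\delta'\|\varphi\|_{\h}^2$ after renaming the constant. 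Conclusion $(iii)$ is immediate: $(i)$ places $\|\varphi\|_{\h}^2$ in $[0,Y_*]$, where $F$ is nonnegative (it is increasing with $F(0)=0$), so $E_g(\varphi) \ge F(\|\varphi\|_{\h}^2) \ge 0$.

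The main obstacle in this plan is the comparison $E_g(Q_b) \le F(Y_*)$, since $Q_b$ solves the pure $|x|^{-b}$ equation \eqref{ell} rather than an elliptic problem tailored to the full inhomogeneity $g$; the variational condition \eqref{var-con} is engineered precisely so that the two-sided envelope $g_i|x|^{-b} \le g \le g_s|x|^{-b}$ is enough to bridge this gap and allow the Kenig--Merle-style one-variable trapping argument to go through unchanged.
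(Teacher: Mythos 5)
Your proof is correct and follows essentially the same route as the paper: both reduce the statement to the monotonicity of the one-variable trapping function $y \mapsto \frac12 y - \frac{C_*^{p+1}}{p+1}y^{\frac{p+1}{2}}$ built from the sharp Hardy--Sobolev constant (your $F$ is the paper's $f$ composed with the rescaling $y = \gs Y$), with the two-sided envelope $g_i|x|^{-b} \le g \le g_s|x|^{-b}$ and the condition \eqref{var-con} used exactly where you use them. Your explicit verification that $E_g(Q_b) \le \frac{p_0+1-g_i}{p_0\,\gs}\,F$-maximum is in fact cleaner than the paper's corresponding step, where the factor written as $\gs(2-g_i)$ should read $\gs\frac{p_0+1-g_i}{p_0}$ to match \eqref{var-con}.
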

\begin{proof}[Proof of Lemma \ref{vari-lem}]
	Let $f(y)= \frac12y - \frac{C_*^{p+1}}{p+1}y^{\frac{p+1}2}$, $\bar{y} = \gs\|\varphi\|_{\h}^2$, and $ y_0 = \|Q_b\|_{\h}^2$.
	Then we estimate the following:
	\begin{align*}
	f(\bar{y}) &= \frac{\gs}{2}\|\varphi\|_{\h}^2 -\frac{(\gs)^\frac{p+1}2C_*^{p+1}}{p+1}\|\varphi\|_{\h}^{p+1} \le \frac{\gs}{2} \|\varphi\|_{\h}^2 - \frac{(\gs)^\frac{p+1}2}{p+1}\||x|^{-\frac b{p+1}}\varphi\|_{L_x^{p+1}}^{p+1}\\
	&\le \gs \eg(\varphi) \le \gs(2-g_i)(1-\delta_0)f(y_0) \le (1-\delta_0)f(y_0).
	\end{align*}
	Since $0\le \bar{y}<y_0$ and $f$ is strictly increasing on $[0, y_0]$, for some $\bar \delta > 0$ we get
	$$0 \le f(\bar{y}), \quad \bar{y} \le (1-\bar{\delta})y_0.$$
	In particular,
	\begin{align*}
	\int |\nabla\varphi|^2 -g|\varphi|^{p+1} &\ge \int |\nabla\varphi|^2 -g_s C_*^{p+1}\left(\int |\nabla\varphi|^2\right)^{\frac{p+1}2}  \ge  \bar{\delta}\|\varphi\|_{\h}^2.
	\end{align*}
	This completes the proof of Lemma \ref{vari-lem}.
\end{proof}


\section{Profile decomposition}\label{sec-prof}
In this section we introduce a profile decomposition for radial data.

\begin{lem}\label{profile}
	Assume that $\{v_{0, n}\} \subset \hr$, $\|v_{0, n}\|_{\h}\le A$, and $\|e^{it\Delta}v_{0,n}\|_{L_t^qL_x^r}\ge \de > 0$, where $\delta$ is as in Proposition \ref{lwp}. Then up to a subsequence $(\mbox{still called}\; \{v_{0, n}\})$ for any $J \ge 1$ there exists a sequence $\{V_{0,j}\}_{1 \le j \le J}$ in $\hr$ and a family of parameters $(\ljn,\tjn)\in \mathbb{R}^+\times\mathbb{R}$ with
	$$
	\frac{\ljn}{\lam_{j',n}} + \frac{\lam_{j',n}}{\ljn} +\frac{|\tjn-t_{j',n}|}{\ljn^2} \xrightarrow{n\to\infty} 0 \quad  j\neq j'
	$$
	such that
	\begin{enumerate}
		\item[$(i)$]   $\|V_{0,1}\|_{\h}\ge \al_0(A) > 0$,
		\item[$(ii)$]  $v_{0,n}=\sum_{j=1}^{J} \ljn^{-\frac12}V_j^l\left(-\frac{\tjn}{\ljn^2}, \frac{x}{\ljn}\right) + w_n^J, \quad V_j^l(t, x) = [e^{it\Delta}V_{0, j}](x)$,
		\item[$(iii)$] $\underset{n\to\infty}{\liminf}\left\|\nabla e^{it\Delta}w_n^J\right\|_{L_t^qL_x^r} \to 0$ for any admissible pair $(q, r)$ with $2 < q < \infty$,
		\item[$(iv)$]  $\|v_{0,n}\|_{\h}^2 = \sum_{j=1}^{J}\|V_{0,n}\|_{\h}^2 + \|w_n^J\|_{\h}^2 + o(1)$ as $n \to \infty$,
		\item[$(v)$]   $E_g(v_{0,n}) = \sum_{j=1}^{J}E_g\left(\ljn^{-\frac12}V_j^l(-\frac{\tjn}{\ljn^2}, \frac{\cdot}{\ljn})\right) + E_g(w_n^j)+o(1)$ as $n \to \infty$.
	\end{enumerate}
\end{lem}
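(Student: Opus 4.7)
The plan is to follow the Keraani/Kenig--Merle profile decomposition scheme specialized to the radial setting in $\rt$, in which the absence of translation invariance reduces the parameters to $(\ljn,\tjn)$. To extract the first profile I use a refined Strichartz-type inequality for radial $\hr$ functions (coming, for instance, from the compactness of the radial embedding on bounded annuli, or from Strauss-type pointwise decay), which implies that whenever $\|e^{it\Delta}v_{0,n}\|_{L_t^qL_x^r}\ge \delta$ there exist, up to a subsequence, parameters $(\lam_{1,n},t_{1,n})$ and a nonzero $V_{0,1}\in \hr$ satisfying
\[
\lam_{1,n}^{1/2}\bigl[e^{it_{1,n}\Delta}v_{0,n}\bigr](\lam_{1,n}\,\cdot\,)\rightharpoonup V_{0,1}\ \text{weakly in}\ \hr,\qquad \|V_{0,1}\|_{\h}\gtrsim \delta^{\beta}
\]
for some $\beta=\beta(A)>0$ determined by the Strichartz exponents; this delivers (i).

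Setting $w_n^1(x):=v_{0,n}(x)-\lam_{1,n}^{-1/2}V_1^l\bigl(-t_{1,n}/\lam_{1,n}^2,\,x/\lam_{1,n}\bigr)$, the weak convergence gives $\|w_n^1\|_{\h}^2=\|v_{0,n}\|_{\h}^2-\|V_{0,1}\|_{\h}^2+o(1)$. I iterate on $\{w_n^1\}$: if $\liminf_n\|\nabla e^{it\Delta}w_n^1\|_{L_t^qL_x^r}<\delta$ I stop, otherwise I extract the next profile by the same argument. The constraint $\sum_j\|V_{0,j}\|_{\h}^2\le A^2$ together with the quantitative lower bound on each extracted norm forces the residual linear Strichartz norm to vanish as $J\to\infty$, which is (iii). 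The parameter-orthogonality statement is proved by contradiction: if it failed for some pair $j\ne j'$, then after passing to a subsequence one could identify $V_{0,j'}$ as a nonzero rescale/time-shift of $V_{0,j}$, contradicting the construction that extracted $V_{0,j'}$ as a weak limit from a remainder already orthogonal to the earlier profiles. Identity (iv) then follows by expanding $\|\sum_{j=1}^J\phi_{j,n}+w_n^J\|_{\h}^2$ in the Hilbert space $\h$, where $\phi_{j,n}(x):=\ljn^{-1/2}V_j^l(-\tjn/\ljn^2,\,x/\ljn)$, and killing the cross terms via parameter orthogonality and the weak-vanishing $\ljn^{1/2}w_n^J(\ljn\,\cdot\,)\rightharpoonup 0$ in $\hr$.

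The kinetic part of (v) is (iv). For the potential part, after the change of variables $y=x/\ljn$ the $j$th diagonal term becomes $\int \ljn^{\,b}g(\ljn y)\,|V_j^l(-\tjn/\ljn^2,y)|^{p+1}\,dy$, and the two-sided bound \eqref{scaling} yields the pointwise comparison $g_i|y|^{-b}\le \ljn^{\,b}g(\ljn y)\le g_s|y|^{-b}$ uniformly in $n$. This uniform domination by the scale-invariant Hardy--Sobolev weight reduces the decoupling to the model functional $\int|x|^{-b}|u|^{p+1}\,dx$, for which the standard orthogonality-of-scales argument kills the mixed terms $\phi_{j,n}\cdot \phi_{j',n}$ ($j\ne j'$); mixed terms involving $w_n^J$ vanish via the weak convergence to zero of the suitably rescaled remainders combined with the compact Hardy--Sobolev embedding on bounded annuli. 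The main obstacle is precisely this last step: unlike the pure inhomogeneous case $g(x)=|x|^{-b}$, the weight $g$ here is not itself scale-invariant, so the rescaled weights $\ljn^{\,b}g(\ljn y)$ do not literally coincide across different scales and one cannot directly quote a pure Hardy--Sobolev decoupling; the two-sided bound in \eqref{scaling} is exactly what lets one sandwich the integrals uniformly and push the standard orthogonality argument through.
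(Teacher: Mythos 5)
Your proposal is correct and takes essentially the same approach as the paper: the paper likewise quotes the standard radial (Keraani-type) profile decomposition for $(i)$--$(iv)$ and only details the potential-energy decoupling $(v)$. There, exactly as you indicate, the cross terms are dominated via the upper bound $\lambda^{b}g(\lambda x)\le g_s|x|^{-b}$ from \eqref{scaling} and then killed by the usual scale/time orthogonality, dispersive decay, and weak-convergence/compactness arguments (the paper implements this by peeling off one profile at a time using the elementary inequality \eqref{diff-ineq} and splitting into the cases $|t_{1,n}|/\lambda_{1,n}^{2}\to\infty$ and $-t_{1,n}/\lambda_{1,n}^{2}\to\bar t$).
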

\begin{rem}
	The space-frequency translation has been removed by radial symmetry in the above lemma. This fact enables us to get a stronger convergence of remainder terms $w_n^J$ in $L_t^q \dot W^{1, r}$ than expected in $L_t^q L^\frac{3r}{3-r}$. Together with long-time perturbation (Proposition \ref{per}), we will utilize this strong convergence for the proof of the existence and compactness of minimal energy blowup solution. For the non-radial case one has only to replace the convergence in the space $L_t^q L^\frac{3r}{3-r}$.
\end{rem}

\begin{proof}
	One can show $(i), (ii), (iii)$, and $(iv)$ by exactly the same way as in \cite{ke,chhwoz,chhwkwle}. We omit the details. Here we only consider the energy decoupling $(v)$.
	Due to the kinetic energy decoupling $(iv)$ it suffices to show
	$$
	\lim_{n \to \infty} \left(\int g|v_{0, n}|^{p+1}\,dx - \sum_{1 \le j \le J}\int g|G_{j, n}|^{p+1}\,dx - \int g|w_n^J|^{p+1}\,dx\right) = 0,
	$$
	where
	$$
	G_{j,n}(x) = \ljn^{-\frac12}V_j^l\left(-\frac{\tjn}{\ljn^2}, \frac{x}{\ljn}\right).
	$$
	For this we consider
	\begin{align}\label{1st-step}
	\int g(x)|v_{0,n}|^{p+1} dx - \int g(x)|v_{0,n} - G_{1,n}|^{p+1}\,dx -\int g(x)|G_{1,n}|^{p+1} dx \xrightarrow{n\to\infty} 0.
	\end{align}
	Then by repeating the same argument w.r.t. $j$ we conclude the proof.

In order to show \eqref{1st-step} we utilize the following inequality: (p.358 of \cite{lieb})
\begin{align}\label{diff-ineq}
\Big||z_1|^q - |z_1-z_2|^q - |z_2|^q\Big| \le q2^{q-1}\Big(|z_1-z_2|^{q-1}|z_2| + |z_1-z_2||z_2|^{q-1}\Big)\;\;\mbox{for}\;\;1 \le q < \infty.
\end{align}

	Let $s_n = -\frac{t_{1, n}}{\lambda_{1, n}^2}$. Suppose that $\underset{n \to \infty}{\lim}|s_n|= \infty$. Then by the time-decay estimate such that $\|e^{it_n\Delta}f\|_{L_x^p} \to 0$ as $|t_n| \to \infty$ for any $p > 2$ and $f \in C_0^\infty$, we have from \eqref{diff-ineq} that
	\begin{align*}
	&\int g(x)|v_{0,n}|^{p+1} dx - \int g(x)|v_{0,n} - G_{1,n}|^{p+1}\,dx -\int g(x)|G_{1,n}|^{p+1} dx\\
	&\qquad  \les \int |x|^{-b}(|v_{0,n}-G_{1,n}|^p|G_{1,n}| + |v_{0,n}-G_{1,n}||G_{1,n}|^p) dx\\
	&\qquad  \les \||x|^{-1}(v_{0,n} - G_{1, n})\|_{L_x^2}^b\|v_{0, n} - G_{1,n}\|_{L_x^6}^{p-b}\|G_{1, n}\|_{L_x^6} + \||x|^{-1}G_{1, n}\|_{L_x^2}^b\|v_{0,n} - G_{1, n}\|_{L_x^6}\|G_{1,n}\|_{L_x^6}^{p-b}\\
&\qquad \to 0 \;\;\mbox{as}\;\;n \to \infty.
	\end{align*}

	Up to a subsequence we now assume that $\underset{n \to \infty}{\lim}s_n= \bar{t} $. And let $\sca(v_{0,n})= \ljn^{\frac12}v_{0,n}(\ljn\cdot)$.
	Then we have that
	\begin{align}
	&\sca(v_{0,n}) \to e^{i\bar{t}\Delta}V_{0,1} \quad \text{weakly in} ~\h ~\text{as}~ n \to \infty,\nonumber\\
	&\sca(G_{1,n}) \to e^{i\bar{t}\Delta}V_{0,1} \quad \text{strongly in} ~L_x^6 ~\text{as}~ n \to \infty.\label{strong-con}
	\end{align}
	By scaling the left hand side of \eqref{1st-step} is written as
	\begin{align*}
	&\int g(x)|v_{0,n}|^{p+1} dx - \int g(x)|v_{0,n}  - G_{1,n}|^{p+1}\,dx -\int g(x)|G_{1,n}|^{p+1} dx \\
	&\quad = \int (\ljn)^b g(\ljn x)|\sca(v_{0,n})|^{p+1} dx - \int (\ljn)^b g(\ljn x)|\sca(v_{0,n}) -\sca(G_{1,n})|^{p+1} dx\\
	&\qquad\qquad - \int (\ljn)^b g(\ljn x)\label{key}|\sca(G_{1,n})|^{p+1} dx\\
	&\quad =: A_n + B_n + C_n,
	\end{align*}
	where
	\begin{align*}
	A_n &= \int (\ljn)^b g(\ljn x)|\sca(v_{0,n})|^{p+1} dx - \int (\ljn)^b g(\ljn x)|\sca(v_{0,n})-e^{i\bar{t}\Delta}V_{0,1}|^{p+1}dx\\
&\qquad\qquad\qquad\qquad\qquad\qquad\qquad\quad\;\;\;\; - \int (\ljn)^b g(\ljn x)|e^{i\bar{t}\Delta}V_{0,1}|^{p+1}\,dx,\\
	B_n &= \int (\ljn)^b g(\ljn x)|\sca(v_{0,n})-e^{i\bar{t}\Delta}V_{0,1}|^{p+1}\,dx - \int (\ljn)^b g(\ljn x)|\sca(v_{0,n}) -\sca(G_{1,n})|^{p+1}\, dx,\\
	C_n &= \int (\ljn)^b g(\ljn x)|e^{i\bar{t}\Delta}V_{0,1}|\,dx - \int (\ljn)^b g(\ljn x)|\sca(G_{1,n})|\,dx.
	\end{align*}

	Using the density by $C_0^\infty$, and the compactness of multiplication operator by $\beta \in C_0^\infty$ from $\h$ to $L^3$, one can readily show that for any $\varepsilon > 0$ there exist $\beta \in C_0^\infty$ close to $e^{i\bar{t}\Delta}V_{0,1}$ in $\h$ and $N = N(\varepsilon)$ such that if $n \ge N$, then $(\sca(v_{0,n})- e^{i\bar{t}\Delta}V_{0,1})\beta$ close to $0$ in $L_x^3$ and thus
	\begin{align*}
	|A_n| &\le \int (\ljn)^b|g(\ljn x)|\left|\sca(v_{0,n})|^{p+1} -|\sca(v_{0,n})-e^{i\bar{t}\Delta}V_{0,1}|^{p+1}- |e^{i\bar{t}\Delta}V_{0,1}|^{p+1}\right|\,dx\\
	&\les \int |x|^{-b}\left||\sca(v_{0,n})|^{p+1} -|\sca(v_{0,n})-e^{i\bar{t}\Delta}V_{0,1}|^{p+1}- |e^{i\bar{t}\Delta}V_{0,1}|^{p+1}\right|\,dx\\
	& \les \int |x|^{-b}\left( |\sca(v_{0,n})-e^{i\bar{t}\Delta}V_{0,1}|^p|e^{i\bar{t}\Delta}V_{0,1}| + |\sca(v_{0,n})-e^{i\bar{t}\Delta}V_{0,1}||e^{i\bar{t}\Delta}V_{0,1}|^p\right)\, dx\\
	& \les \|\sca(v_{0,n})-e^{i\bar{t}\Delta}V_{0,1}\|_{\h}^b\|\sca(v_{0,n})-e^{i\bar{t}\Delta}V_{0,1}\|_{L_x^6}^{p-b}\|e^{i\bar{t}\Delta}V_{0,1}-\beta\|_{L_x^6}\\
	& \qquad +\|(\sca(v_{0,n})-e^{i\bar{t}\Delta}V_{0,1})\beta\|_{L_x^3}\||x|^{-1}(\sca(v_{0,n})-e^{i\bar{t}\Delta}V_{0,1})\|_{L_x^2}^b\|\sca(v_{0,n})-e^{i\bar{t}\Delta}V_{0,1}\|_{L_x^6}^{p-1-b}\\
	& \qquad +\|\sca(v_{0,n})-e^{i\bar{t}\Delta}V_{0,1}\|_{L_x^6}\||x|^{-1}(e^{i\bar{t}\Delta}V_{0,1}-\beta)\|_{L_x^2}^b\|e^{i\bar{t}\Delta}V_{0,1}-\beta\|_{L_x^6}^{p-b}\\
	& \qquad +\|(\sca(v_{0,n})-e^{i\bar{t}\Delta}V_{0,1})\beta\|_{L_x^3}\||x|^{-1}\beta\|_{L_x^2}^{b}\|\beta\|_{L_x^6}^{p-1-b}\\
    &  < \varepsilon.
	\end{align*}
We used \eqref{diff-ineq} for the third inequality.
	
	On the other hand, we have by direct calculation that
	\begin{align*}
	|B_n| &\le \int (\ljn)^b|g(\ljn x)|\left||\sca(v_{0,n}) -\sca(G_{1,n})|^{p+1} - |\sca(v_{0,n}) - e^{i\bar{t}\Delta}V_{0,1}|^{p+1}\right|\,dx\\
	&\les   \int|x|^{-b}\left(|\sca(v_{0,n})|^p + |\sca(G_{1,n})|^{p} + |e^{i\bar{t}\Delta}V_{0,1}|^{p}\right)|\sca(G_{1,n}) - e^{i\bar{t}\Delta}V_{0,1}|\,dx,\\
	|C_n| &\le  \int (\ljn)^b|g(\ljn x)|\left||\sca(G_{1,n})|^{p+1}  - |e^{i\bar{t}\Delta}V_{0,1}|^{p+1}\right|\,dx\\
	&\les  \int|x|^{-b} \left(|\sca(G_{1,n})|^{p}  + |e^{i\bar{t}\Delta}V_{0,1}|^{p}\right)|\sca(G_{1,n}) - e^{i\bar{t}\Delta}V_{0,1}|\,dx.
	\end{align*}
	Thus the strong convergence \eqref{strong-con} concludes that
	$$\lim_{n\to\infty}(B_n + C_n) = 0.$$
	This completes the proof of Lemma \ref{profile}.
\end{proof}

\section{Minimal energy blowup solution (MEBS)}\label{sec-mebs}
The aim of this section is to show the existence and compactness of MEBS. Our proof is quite similar to that of \cite{km} except for the inhomogeneous coefficient $g$.
However, for the convenience of readers, we provide a sketch of proof.
\subsection{Existence of MEBS}

For each $0 < e < \eg(Q_b)$ let
$$
\mathcal A(e) := \left\{\varphi \in \hr : E_g(\varphi) < e, \gs\|\varphi\|_{\h}^2 < \|Q_b\|_{\h}^2 \right\}$$
and then
$$\mathcal \beta(e) := \sup\Big\{\|v\|_{\sst} : v(0) \in \mathcal A(e), v \;\;\mbox{solution to}\;\;\eqref{maineq} \Big\}.
$$
Define $\egc = \sup\{e : \mathcal \beta(e) < +\infty\}$. In view of the blowup criterion and small data scattering (Remarks \ref{blowup-cri} and \ref{scattering}) we deduce that $0 < \egc \le E_g(Q_b)$. In this subsection we assume that $\egc < \eg(Q_b)$, which will lead us to a contradiction.

By the definition of $\egc$ we deduce that
\begin{enumerate}
	\item If $0 \le e <\egc$, $\gs\|\varphi\|_{\h}^2 < \|Q_b\|_{\h}^2$, and $\eg(\varphi) < e$, then $\|u\|_{\sst} < +\infty$.
	
	\item If $\egc \le e < \eg(Q_b)$, $\gs\|\varphi\|_{\h}^2 < \|Q_b\|_{\h}^2$, and  $\egc \le \eg(\varphi)< e <\eg(Q_b)$, then $\|u\|_{\sst} = +\infty$.
\end{enumerate}
At this point, we may expect that $\eg(Q_b)$ is critical value between GWP and blowup.

\begin{prop}\label{mebs}
	Let $\varphi_c \in \hr$ satisfy that $E_g(\varphi_c) = \egc(<E_g(Q_b))$ and $\gs\|\varphi_c\|_{\h}^2 < \|Q_b\|_{\h}^2$. If $u_c$ is the corresponding solution to \eqref{maineq}, then $\|u_c\|_{\sst} = +\infty$.
\end{prop}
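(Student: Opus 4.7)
The plan is to follow the standard Kenig--Merle extraction scheme, adapted to the inhomogeneous coefficient $g$. Start with a minimizing sequence: by the defining property of $\egc$, choose $\{\varphi_n\}\subset\hr$ with $E_g(\varphi_n)\searrow\egc$, $\gs\|\varphi_n\|_{\h}^2<\|Q_b\|_{\h}^2$, and corresponding solutions $u_n$ satisfying $\|u_n\|_{\sst}\to+\infty$. Lemma \ref{trapping} keeps $\|\varphi_n\|_{\h}$ bounded, and Remark \ref{blowup-cri} (small-data GWP) lets us assume $\|e^{it\Delta}\varphi_n\|_{L^{10}_{t,x}(\mathbb{R})}\ge\delta$. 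Apply the radial profile decomposition (Lemma \ref{profile}) to produce profiles $\{V_{0,j}\}$ and parameters $(\ljn,\tjn)$, with a remainder $w_n^J$ whose linear evolution is asymptotically small in every admissible Strichartz norm.

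The first main step is to show that exactly one profile survives. For each $j$, define the nonlinear profile $\ujn$ as the solution of the $g_{\ljn}$-coefficient version of \eqref{maineq}, with $g_{\ljn}(x):=\ljn^b g(\ljn x)$; note that $g_{\ljn}$ still obeys \eqref{scaling}, \eqref{var-con}, and \eqref{rig-con} uniformly in $n$, and in particular the ground-state threshold is unchanged. Combining the energy decoupling (v) of Lemma \ref{profile} with the coercivity in Lemma \ref{trapping}, each profile has $E_g(V_{0,j})\le\egc$ in the limit. If two or more profiles were nontrivial, each one would have energy strictly below $\egc$, so by the very definition of $\egc$ each $\ujn$ would have bounded $\sst$ norm. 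The pairwise orthogonality of the parameters turns the superposition $\sum_{j\le J}\ujn+e^{it\Delta}w_n^J$ into an almost-solution to \eqref{maineq} with vanishing error $\|\nabla e_n\|_{L^2_tL^{6/5}_x}\to0$; the long-time perturbation Proposition \ref{per} would then force $\|u_n\|_{\sst}$ to be uniformly bounded, contradicting the choice of $u_n$. Hence only one profile $V_{0,1}$ survives, $E_g(V_{0,1})=\egc$, and $w_n^1\to0$ strongly in $\h$.

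Next, pin down the parameters of the surviving profile. If $-t_{1,n}/\lam_{1,n}^2\to\pm\infty$, the dispersive decay makes $\|e^{it\Delta}V_{0,1}\|_{L^{10}_{t,x}}$ on the appropriate half-line arbitrarily small, the nonlinear profile scatters in the rescaled equation with bounded $\sst$ norm, and the perturbation argument above again contradicts the blowup of $u_n$. Hence $t_{1,n}/\lam_{1,n}^2\to t^\star\in\mathbb{R}$. The delicate point is the scale $\lam_{1,n}$: since \eqref{maineq} is not scale invariant, if $\lam_{1,n}\to0$ or $\infty$ the coefficient $g_{\lam_{1,n}}(x)$ degenerates pointwise to $g_s|x|^{-b}$ or $g_i|x|^{-b}$, respectively. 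In either limit one obtains the purely homogeneous equation (a constant multiple of the $|x|^{-b}$ nonlinearity) for which the sharp below-ground-state scattering is already available from \cite{chkl2} suitably adapted to this scale-invariant model, so once more the nonlinear profile scatters and we get a contradiction. Therefore $\lam_{1,n}\to\lam^\star\in(0,\infty)$, and after passing to a subsequence we may assume the triple $(V_{0,1},\lam_{1,n},t_{1,n})$ is asymptotically stationary.

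Finally set $\varphi_c:=(\lam^\star)^{-1/2}[e^{it^\star\Delta}V_{0,1}](\cdot/\lam^\star)$. By the limits just obtained $\varphi_c\in\hr$, $E_g(\varphi_c)=\egc$, $\gs\|\varphi_c\|_{\h}^2<\|Q_b\|_{\h}^2$, and the minimizing sequence satisfies $\|\varphi_n-\varphi_c\|_{\h}\to0$. If we had $\|u_c\|_{\sst}<\infty$, Proposition \ref{per} applied with $\tilde u=u_c$ and initial data $\varphi=\varphi_n$ would yield $\|u_n\|_{\sst}\le C(\|u_c\|_{\sst})$ for $n$ large, contradicting $\|u_n\|_{\sst}\to+\infty$. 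Hence $\|u_c\|_{\sst}=+\infty$, as claimed. The main obstacle is the scale analysis in the previous paragraph: because $g$ is genuinely inhomogeneous, the degenerations $\lam_{1,n}\to0$ or $\infty$ must be ruled out by invoking the scattering theory for a different, scale-invariant, limit equation, and one must verify that the ground-state hypotheses transfer properly to that limit.
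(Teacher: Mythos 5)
Your overall skeleton --- a minimizing sequence with $\|u_n\|_{S(I_n^*)}\to+\infty$, the radial profile decomposition, energy decoupling plus coercivity to isolate a single profile, and long-time perturbation to derive the contradiction --- is the same as the paper's. The genuine gap is in your treatment of the scale parameter $\lam_{1,n}$. You need $\lam_{1,n}\to\lam^\star\in(0,\infty)$ in order to extract a strong $\h$-limit $\varphi_c$ of the data, and you propose to exclude $\lam_{1,n}\to0$ and $\lam_{1,n}\to\infty$ by passing to limiting scale-invariant equations with coefficients $g_s|x|^{-b}$, $g_i|x|^{-b}$ and invoking ``sharp below-ground-state scattering \dots already available from \cite{chkl2} suitably adapted to this scale-invariant model.'' That result is not available: \cite{chkl2} treats only $b=0$, and below-threshold scattering for the focusing energy-critical equation with coefficient $c|x|^{-b}$ is precisely (a special case of) what this paper is proving, so citing it here is circular. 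Moreover the pointwise limits you assert are not automatic --- they require the monotonicity of $s^bg(s)$ supplied by \eqref{rig-con} --- and with that monotonicity they come out the other way around: $\lam\to0$ yields $g_i|x|^{-b}$ and $\lam\to\infty$ yields $g_s|x|^{-b}$. Finally, your $n$-dependent ``nonlinear profiles'' solving the $g_{\ljn}$-coefficient equation are not a single well-defined object to which Proposition \ref{per} (stated for the fixed coefficient $g$) can be applied.

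The paper does not pin down $\lam_{1,n}$ at this stage. Its nonlinear profile $U_1$ is defined (Definition \ref{nl-pro}) as a solution of the original equation \eqref{maineq} that approximates $V_1^l(s_n)$ at the times $s_n=-t_{1,n}/\lam_{1,n}^2$; energy trapping then gives $E_g(U_1(\bar s))=\egc$ and $\gs\|U_1(\bar s)\|_{\h}^2<\|Q_b\|_{\h}^2$, and the contradiction comes from part $(b)$ of Lemma \ref{en-con}: if $\|U_1\|_{S(I_1^*)}$ were finite, long-time perturbation would force $\|u_n\|_{S(I_n^*)}<\infty$. The MEBS is then $u_c=U_1$ itself, so neither strong $\h$-convergence of the data nor convergence of the scales is needed for this proposition; the scales reappear only in Proposition \ref{compactness}, where they are absorbed into the modulation parameter $\lambda(t)$. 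To salvage your route you would have to either prove that $\{\lam_{1,n}\}$ stays in a compact subset of $(0,\infty)$ by an independent argument, or supply the scattering theory for the limiting $c|x|^{-b}$ models separately; as written this step is a gap.
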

The solution $u_c$ is called the minimal energy blowup solution (MEBS). For the proof we need to introduce nonlinear profile.

\begin{defn}[Nonlinear profile]\label{nl-pro}
	Let $v_0 \in \hr$, $v = e^{it\Delta}v_0$, and $\{t_n\}$ a sequence with $t_n \to \bar{t} \in [-\infty,+\infty]$. We say that $u(t, x)$ is a non-linear profile associated with $(v_0,\{t_n\})$ if there exist a maximal interval $I^*$ with $\bar{t} \in I^*$ such that $u$ is solution of \eqref{maineq} on $I^*$ and $\underset{n\to\infty}{\lim}\|u(\cdot,t_n)-v(\cdot,t_n)\|_{\h} = 0$.
\end{defn}

\begin{rem}
	The nonlinear profile always exists. In fact, if $\bar{t} \in (-\infty,+\infty)$, we have only to solve \eqref{maineq} with initial data $v(x,\bar{t})$. And if $\bar{t} = \pm\infty$, by solving the integral equation
	$$
	u(t) = e^{it\Delta}v_0 +i\int_{t}^{\pm\infty}e^{i(t-t')\Delta}[g|u|^{p-1}u]dt'
	$$
	we get the nonlinear profile.
\end{rem}

\begin{rem}
	If $u_1$ and $u_2$ are nonlinear profiles associated with $(v_0,\{t_n\})$, then $u_1 = u_2$ by LWP. Hence the uniqueness of nonlinear profile is guaranteed.
\end{rem}


The following lemma is useful for the proof of Propositions \ref{mebs} and \ref{compactness}. While proving this lemma, we demonstrate the usage of profile decomposition and long-time perturbation.

\begin{lem}\label{en-con}
	Let $\{z_{0,n}\}\in\hr$, $\gs\|z_{0,n}\|_{\h}^2 < \|Q_b\|_{\h}^2$, $E_g(z_{0,n}) \to E_{g,c}(< E_g(Q_b))$, and $\|e^{it\Delta}z_{0,n}\|_{\s}\ge \delta > 0$. Let $\{V_{0,j}\}$ be the linear profiles.	Assume that one of
	\begin{enumerate}
		\item[$(a)$] $\underset{n\to\infty}{\liminf} E_g(V_1^l(-\frac{t_{1,n}}{\lam_{1,n}^2})) < E_{g,c}$,
		\item[$(b)$] $\underset{n\to\infty}{\liminf} E_g(V_1^l(-\frac{t_{1,n}}{\lam_{1,n}^2})) = E_{g,c}$ and for $s_n=-\frac{t_{1,n}}{\lam_{1,n}^2},$ after passing to a subsequence so that $s_n \to \bar{s}\in[-\infty,\infty]$ and $E_g(V_1^l(s_n)) \to E_{g,c}$, and if $U_1$ is the nonlinear profile associated with $(V_{0,1},\{s_n\})$, then $I = \mathbb R$ and $\|U_1\|_{\s} < \infty$.
	\end{enumerate}
	Then, if $\{z_n\}$ is solution of \eqref{maineq} with $\{z_{0,n}\}$, $\|z_n\|_{\s} < \infty$, for $n$ large.
\end{lem}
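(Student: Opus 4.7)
The plan is the standard Kenig--Merle nonlinear profile reconstruction argument adapted to the inhomogeneous coefficient $g$, followed by an application of the long-time perturbation result (Proposition \ref{per}). First, I invoke the profile decomposition of Lemma \ref{profile} applied to $\{z_{0,n}\}$, obtaining radial profiles $\{V_{0,j}\}_{j \ge 1}$, orthogonal parameters $(\ljn,\tjn)$, and remainders $w_n^J$. For each $j$, after passing to a subsequence so that $s_{j,n} := -\tjn/\ljn^2 \to \bar s_j \in [-\infty,+\infty]$, let $U_j$ denote the nonlinear profile associated with $(V_{0,j},\{s_{j,n}\})$ in the sense of Definition \ref{nl-pro}. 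The target is to construct the approximate solution
\begin{equation*}
\tilde z_n(t,x) \;=\; \sum_{j=1}^{J} \ljn^{-\frac12}\, U_j\!\left(\tfrac{t-\tjn}{\ljn^2},\tfrac{x}{\ljn}\right) + e^{it\Delta} w_n^J,
\end{equation*}
verify its approximation properties, and feed it into Proposition \ref{per} with $\tilde u = \tilde z_n$.

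The next step is to show that each $U_j$ scatters, i.e., $\|U_j\|_{S(\mathbb{R})}<\infty$. The energy and $\h$ Pythagorean decouplings of Lemma \ref{profile} together with $E_g(z_{0,n}) \to E_{g,c}$ and $\gs\|z_{0,n}\|_{\h}^2 < \|Q_b\|_{\h}^2$ force, in the limit $n\to\infty$,
\begin{equation*}
\sum_j E_g\!\big(V_j^l(s_{j,n})\big) + E_g(w_n^J) \to E_{g,c}, \qquad \gs\|V_{0,j}\|_{\h}^2 < \|Q_b\|_{\h}^2 .
\end{equation*}
In case $(a)$, the strict inequality $\liminf E_g(V_1^l(s_{1,n})) < E_{g,c}$ combined with Lemma \ref{vari-lem}$(iii)$ (nonnegativity of energy under the subthreshold kinetic constraint) implies $E_g(V_j^l(s_{j,n})) < E_{g,c}$ for every $j$ and large $n$; hence the very definition of $E_{g,c}$ together with the $S$-norm blowup criterion (Remark \ref{blowup-cri}) delivers $\|U_j\|_{S(\mathbb{R})} < \infty$ for all $j$. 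In case $(b)$, $U_1$ scatters by hypothesis, and since each individual energy is nonnegative and $E_g(V_1^l(s_{1,n})) \to E_{g,c}$, the decoupling forces $V_{0,j} = 0$ for every $j \ge 2$, so no further profiles appear. Either way $\sum_{j}\|U_j\|_{S(\mathbb{R})}^2<\infty$.

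Finally I verify the hypotheses of Proposition \ref{per}. Part $(ii)$ of Lemma \ref{profile} gives $\tilde z_n(0) - z_{0,n} \to 0$ in $\h$; part $(iii)$ together with orthogonality of $(\ljn,\tjn)$ yields a uniform bound on $\|\tilde z_n\|_{S(\mathbb{R})}$ via an almost-disjointness argument in space-time. The core of the proof, and the main obstacle, is to show that
\begin{equation*}
e_n := i\partial_t \tilde z_n + \Delta \tilde z_n + g|\tilde z_n|^{p-1}\tilde z_n \longrightarrow 0 \quad \text{in}\ L_t^2 \dot W^{1,6/5}\text{-norm},
\end{equation*}
after first fixing $J$ large and then letting $n\to\infty$. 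This reduces to three ingredients: $(i)$ cross-terms in $g\,|\sum_j U_j^{(n)}|^{p-1}\sum_j U_j^{(n)} - \sum_j g\,|U_j^{(n)}|^{p-1}U_j^{(n)}$ vanish by pairwise $(\ljn,\tjn)$-orthogonality combined with the pointwise inequality \eqref{diff-ineq} and Lemma \ref{hs-ineq}; $(ii)$ the rescaled coefficients $g_{\ljn}(x):=\ljn^b g(\ljn x)$ produced by the scaling invariance may be replaced by $g$ modulo errors controlled by the pointwise bound $|x||\nabla g(x)| \lesssim |x|^{-b}$ from \eqref{scaling}, uniformly dominated by the scale-invariant weight $|x|^{-b}$; $(iii)$ the contribution of the linear remainder $e^{it\Delta} w_n^J$ to the nonlinearity is absorbed by taking $J$ large, using Lemma \ref{profile}$(iii)$ in an admissible norm $L_t^q \dot W^{1,r}$. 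Plugging $e_n \to 0$ into Proposition \ref{per} then yields $\|z_n\|_{S(\mathbb{R})}<\infty$ for all sufficiently large $n$, completing the proof.
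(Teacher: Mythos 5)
Your overall architecture---profile decomposition, scattering of each nonlinear profile via the criticality of $\egc$, reassembly into an approximate solution, and long-time perturbation---is the same as the paper's, which treats case $(b)$ separately (there all profiles with $j\ge 2$ vanish and one perturbs directly off $U_1$) and runs the full reconstruction only in case $(a)$. Two points in your write-up do not hold as stated. First, in case $(a)$ the strict inequality $\liminf_n \eg(V_1^l(s_{1,n}))<\egc$ together with nonnegativity of the individual energies does \emph{not} by itself give $\eg(V_j^l(s_{j,n}))<\egc$ for $j\ge 2$: nonnegativity only yields $\eg(V_j^l(s_{j,n}))\le \egc+o(1)$. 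The strictness for $j\ge 2$ comes from the quantitative lower bound $\|V_{0,1}\|_{\h}\ge\al_0>0$ of Lemma \ref{profile}$(i)$ combined with the coercivity $\eg\sim\|\cdot\|_{\h}^2$ of Lemma \ref{trapping}$(iii)$, which give $\eg(V_1^l(s_n))\ge C\al_0>0$ and hence $\eg(V_j^l(s_{j,n}))\le\egc-C\al_0+o(1)$; this is exactly how the paper argues. (Similarly, the summability $\sum_j\|U_j\|_{S(\mathbb R)}^2<\infty$ requires the small-data bound $\|U_j\|_{S(\mathbb R)}\le C\|V_{0,j}\|_{\h}$ for $j\ge j_0$, which you should record.)

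Second, and more seriously, your ingredient $(ii)$ in the error analysis fails. Since $U_j$ solves \eqref{maineq} with the coefficient $g$, the rescaled profile $\ujn$ solves the equation with coefficient $\lam_{j,n}^{-b}g(x/\lam_{j,n})$, so the term $\sum_j\bigl(g-\lam_{j,n}^{-b}g(\cdot/\lam_{j,n})\bigr)|\ujn|^{p-1}\ujn$ appears in $e_n$. Writing $h=|x|^bg$, the hypothesis $|x||\nabla g(x)|\les|x|^{-b}$ only gives $|\nabla h|\les|x|^{-1}$, whence $|x|^b\bigl|g_\lam(x)-g(x)\bigr|=|h(\lam x)-h(x)|\les\min\{|\log\lam|,\,g_s-g_i\}$; this is uniformly bounded but in no way small, so replacing $g_{\ljn}$ by $g$ costs an $O(1)$ error in $L_t^2\dot W^{1,6/5}$, which Proposition \ref{per} cannot absorb. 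Genuine smallness would require the scales $\ljn$ to stay in a compact subset of $(0,\infty)$ (after which one renormalizes them to $1$), or $|x|^bg$ to be asymptotically constant at $0$ and $\infty$. Note that the paper sets $\rne=g|\hn|^{p-1}\hn-\sum_jg|\ujn|^{p-1}\ujn$, i.e.\ with the unrescaled $g$ in every term, so that only the orthogonality cross-terms survive; your decomposition makes explicit a contribution that this formula does not display, and the bound you propose does not close it.
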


\begin{proof}
	
	At first we show part $(b)$. Assume that
	$$
	\underset{n\to\infty}{\liminf} E_g(V_1^l(-\frac{t_{1,n}}{\lam_{1,n}^2})) = E_{g,c}.
	$$
	From the energy coercivity, $(iii)$ of Lemma \ref{trapping}, and the profile decomposition it follows that $V_{0,j} = 0\; (j \ge 2)$ and $\|w_n^J\|_{\h} \to 0$. Now let us set
	$$
	v_{0,n}(x)=\lam_{1,n}^{\frac12}z_{0,n}(\lam_{1,n} x),\quad \tilde{w}_n^J(x)= \lam_{1,n}^{\frac12}w_n^J(\lam_{1,n} x).
	$$
	Then we have
	$$
	v_{0,n}= V_1^l(s_n) + \tilde{w}_n^J, \quad \|\tilde{w}_n^J\|_{\h} \to 0,
	$$
	with $\gs\|v_{0,n}\|_{\h}^2 < \|Q_b\|_{\h}^2$ and $E_g(v_{0,n}) \to E_{g,c} < E_g(Q_b)$.
	
	Then by definition of linear and nonlinear profile we get
	\begin{align*}
	&\|V_1^l(s_n) - U_1(s_n)\|_{\h} \to 0,\\
	& v_{0,n} = U_1(s_n) + \bar{w}_n^j,\;\; \|\bar{w}_n^j\|_{\h} \to 0,\\
	&\|\nabla e^{it\Delta}[V_1^l(s_n) - U_1(s_n)]\|_{W_i(I)} \to 0.
	\end{align*}
	Since $\|\bar{w}_n^j\|_{\h} \to 0$, we have $E_g(U_1)= E_{g,c} < E_g(Q_b)$ and also from $(i)$ of Lemma \ref{trapping} that
	\begin{align*}
	\underset{t\in I}{\sup}\;\gs \|U_1(t)\|_{\h}^2 < \|Q_b\|_{\h}^2.
	\end{align*}
	Therefore the case $(b)$ follows from the long-time perturbation, Proposition \ref{per}.
	
	We next assume that
	$$\underset{n\to\infty}{\liminf} E_g(V_1^l(-\frac{t_{1,n}}{\lam_{1,n}^2})) = E_{g,c}.$$
	We will show that
	$$\underset{n\to\infty}{\liminf} E_g(V_j^l(-\frac{t_{1,n}}{\lam_{1,n}^2})) < E_{g,c} \quad(j=2,\cdots,J).$$
	By profile decomposition, Lemma \ref{profile} we have
	\begin{align*}
	&\|z_{0,n}\|_{\h}^2 = \sum_{j=1}^{J}\|V_{0,j}\|_{\h}^2 + \|w_n^J\|_{\h}^2 + o(1) ~\text{as}~  n \to \infty,&\\
	&E_g(z_{0,n}) \to E_{g,c} < E_g(Q_b).&
	\end{align*}
	For $n$ large, there exists $\delta_0 > 0$ such that $E_g(z_{0,n})\le(1-\delta_0)E_g(Q_b)$ and thus $(i)$ of Lemma \ref{trapping} gives us that $\gs\|z_{0,n}\|_{\h}^2 \le (1- \bar{\delta}) \|Q_b\|_{\h}^2$. Then we obtain that $E_g(V_j^l(s_n)) \ge 0$ and $ E_g(w_n^J) \ge 0$ for all $n$ large, and also that $E_g(V_1^l(s_n)) \ge C\al_0 >0$ for some $\alpha_0 > 0$. Thus,
	$$\eg(z_{0,n}) \ge C\al_0 + \sum_{j=2}^{J}\eg(V_j^l(s_n)) + o(1).$$
	Since $\eg(z_{0,n}) \to \egc$,
	$$\underset{n\to\infty}{\liminf} E_g(V_j^l(s_n)) < E_{g,c}\;\;\mbox{for}\;\; j \ge 2.$$
	
	Let $U_j$ be the nonlinear profile associated with $(V_{0,j},\{s_n\})$. Our next claim is that $\|U_j\|_{S(\mathbb{R})} < +\infty$ for all $j = 1,2, \cdots,J$. By Definition \ref{nl-pro} we have
	$$
	\eg(V_j^l(s_n))<\egc \quad\eg(U_j) < \egc,
	$$
	and
	$$
	\gs\|V_j^l(s_n)\|_{\h}^2 \le \gs\|z_{0,n}\|_{\h}^2 +o(1) \le (1-\bar{\delta})\|Q_b\|_{\h}^2 + o(1)
	$$
	for $n$ large.
	Hence $(i)$ of Lemma \ref{trapping} shows $\gs\|U_j(t)\|_{\h}^2 < \|Q_b\|_{\h}^2$ for all $t\in I_j$. But the criticality of $\egc$ means $I_j^*=\mathbb{R}$. Hence, $\|U_j\|_{S(\mathbb{R})} < \infty$. In fact, for fixed $J$ and large $n$ we have
	$$\sum_{j=1}^{J}\gs\|V_{0,j}\|_{\h}^2 \le \gs\|z_{0,n}\|_{\h}^2 +o(1) \le 2\|Q_b\|_{\h}^2.$$
	Then there exists $j_0$ such that for $j \ge j_0$, $\|V_{0,j}\|_{\h} \le \tilde{\delta}$ with $\tilde \delta > 0$ so small that $\|e^{it\Delta}V_{0,j}\|_{S(\mathbb{R})}\le \delta$. This shows that
	\begin{align*}
	\|U_j\|_{S(\mathbb{R})} &\le 2\delta,\\
	\|U_j\|_{L_t^\infty\h} + \|\nabla U_j\|_{W_1(\mathbb{R})} &+ \|\nabla U_j\|_{W_2(\mathbb{R})} \le C\|V_{0,j}\|_{\h}.
	\end{align*}
	Therefore we get $\|U_j\|_{S(\mathbb{R})} \le C \|V_{0,j}\|_{\h}$ for $j \ge j_0$.
	
	Next we define, for $\varepsilon_0 > 0$ to be chosen later,
	$$\hn = \sum_{j=1}^{J(\varepsilon_0)} (\lam_{j,n})^{-\frac12}U_j\left( \frac{t-t_{j,n}}{\lam_{j,n}^2}, \frac{x}{\lam_{j,n}} \right).$$
	Then from the density of $U_j$ by $C_0^\infty(\mathbb R^{1+3})$ functions and the orthogonality of $(\lam_{j,n}, t_{j,n})$ one can readily obtain that there exists constant $M > 0$ such that $\|\hn\|_{S(\mathbb{R})} \le M$ uniformly in $\varepsilon_0$ for $n \ge n(\varepsilon_0)$.

	Let us now define
	$$
	\rne = g|\hn|^{p-1}\hn - \sum_{j=1}^{J}g|\ujn|^{p-1}\ujn,
	$$
	where
	$$
	\ujn(t, x) = \lam_{j,n}^{-\frac12}U_j\left(\frac{t-t_{j,n}}{\lam_{j,n}^2}, \frac{x}{\lam_{j,n}}\right).
	$$
	Using the Lemmas \ref{str}, \ref{hs-ineq} and definition of $\rne$, $\|\nabla\rne\|_{L_t^2L_x^{\frac65}} \to 0$ as $n \to \infty$.
	Since $\|U_j\|_{S(\mathbb{R})} < \infty$, we deduce that $\|\nabla U_j\|_{W_i(\mathbb{R})} < \infty$. To apply Proposition \ref{per} let us set $\tilde{u} = \hn, e = \rne$ and choose $J(\varepsilon_0)$ large that for $n$ large $\|\nabla e^{it\Delta}w_n^{J(\varepsilon_0)}\|_{W_i(\mathbb{R})} \le \frac{\varepsilon_0}{2}\;(i=1,2)$. Then, for $n$ large,
	$$
	z_{0,n} = \hn(0) + \tilde{w}_n^{J(\varepsilon_0)},
	$$
	where $\|\nabla e^{it\Delta}\tilde{w}_n^{J(\varepsilon_0)}\|_{W_i(\mathbb{R})}\le \varepsilon_0.$ We can show that $\|\nabla\hn\|_{L_t^{\infty}\h} \le A$ uniformly in $\varepsilon_0$ by the same way as of $\hn$.
	Then for $n$ large, $\|\tilde{w}_n^{J(\varepsilon_0)}\|_{\h} \le 2\|Q_b\|_{\h} =: A'$. Choose $\varepsilon < \varepsilon_0 = \varepsilon_0(M, A, A')$. Then
	the long-time perturbation, Proposition \ref{per} leads us to the case $(a)$.
\end{proof}

\vspace{5mm}
Now we prove the Proposition \ref{mebs}.

\begin{proof}[Proof of Proposition \ref{mebs}]
	Let $u_{0,n}\in\hr$ be such that
	$$
	\gs\|u_{0,n}\|_{\h}^2 < \|Q_b\|_{\h}^2, ~E_g(u_{0,n}) \to E_{g,c},~ \|e^{it\Delta}u_{0,n}\|_{S(I_n^*)}\ge\delta, ~\text{and}~ \|u_n\|_{S(I_n^*)} = +\infty,
	$$
	where $I_n^*$ are maximal intervals. Since $E_{g,c} < E_g(Q_b)$, we deduce that $E_g(u_{0,n})\le (1-\delta_0)E_g(Q_b)$ for large $n$.
	Using the Lemma \ref{trapping}, we can find a $\bar{\delta}$ such that
	$$
	\gs\|u_n(t)\|_{\h}^2 \le (1-\bar{\delta})\|Q_b\|_{\h}^2\;\;\mbox{for all}\;\;t \in I_n^*.
	$$
	We fix $J \ge 1$ and apply the profile decomposition to $\{u_{0,n}\}$. Then we get
	\begin{eqnarray}
	\|u_{0,n}\|_{\h}^2 = \sum_{j=1}^{J}\|V_{0,j}\|_{\h}^2 + \|w_n^J\|_{\h}^2 + o(1),\label{ki-decom}\\
	E_g(u_{0,n}) = \sum_{j=1}^{J}E_g(V_j^l(s_n)) + E_g(w_n^J) + o(1),\label{en-decom}
	\end{eqnarray}
	where $s_n=-\frac{t_{1,n}}{\lam_{1,n}^2}$.
	
	For $n$ large, $\gs\|w_n^J\|_{\h}^2 \le (1- \frac{\bar{\delta}}{2})\|Q_b\|_{\h}^2$ and  $\gs\|V_{0,j}\|_{\h}^2 \le (1- \frac{\bar{\delta}}{2})\|Q_b\|_{\h}^2$. These imply that $E_g(w_n^J) \ge 0, E_g(V_j^l(s_n)) \ge 0$ if $n$ is large. Thus we get $E_g(V_1^l(s_n))\le E_g(u_{0,n}) +o(1)$ and then
	$$
	\underset{n\to\infty}{\liminf}E_g(V_1^l(s_n)) \le E_c.
	$$
	
	We assume that $\underset{n\to\infty}{\liminf}E_g(V_1^l(s_n)) < E_c.$ Then this contradicts $(a)$ of Lemma \ref{en-con}. Therefore,
	$$
	\underset{n\to\infty}{\liminf}E_g(V_1^l(s_n)) = E_c.
	$$
	
	Let $U_1$ be the non-linear profile associated with $(V_{0,1},\{s_n\}).$ By \eqref{en-decom} and the facts that
	$$
	E_g(u_{0,n}) \to E_{g,c}\;\;\mbox{and}\;\; E_g(V_1^l(s_n)) \to E_{g,c},
	$$
	we see that $E_g(w_n^J) \to 0, E_g(V_j^l(s_n))\to 0$ for $j \ge 2$.
	
	But the energy coercivity ($(iii)$ of Lemma \ref{trapping}) shows that
	$$
	\sum_{j=2}^{J}\|V_j^l(s_n)\|_{\h}^2 + \|w_n^j\|_{\h}^2 \xrightarrow{n \to \infty} 0.
	$$
	Since $\|V_j^l(s_n)\|_{\h} = \|V_{0,j}\|_{\h} $, $V_{0,j}=0(j>2)$, and $\|w_n^J\|_{\h}^2 \to 0$, so that
	$$
	u_{0,n} = \lam_{1,n}^{-\frac12}V_1^l\left(s_n,\frac{x}{\lam_{1,n}}\right)+ w_n^J.
	$$
	Rescaling as $v_{0,n}(x) = \lam_{1,n}^\frac12u_{0,n}\left(\lam_{1,n}x\right) $, we have $v_{0,n}=V_0^l(s_n)+\tilde{w}_n^J$ with $\|\tilde{w}_n^J\|_{\h} \to 0.$ By definition of nonlinear profile, we get
	\begin{align*}
	E_g(U_1(s_n)) &= E_g(V_1^l(s_n)) +o(1) = E_{g,c} +o(1),\\
	\gs\|U_1(s_n)\|_{\h}^2 &= \gs\|V_1^l(s_n)\|_{\h}^2 +o(1) = \gs\|V_{0,1}\|_{\h}^2 +o(1)\\
	&= \gs\|u_{0,n}\|_{\h}^2 +o(1)   < \|Q_b\|_{\h}^2 \quad \text{as} \quad n\to\infty.
	\end{align*}
	For a fixed $\bar{s} \in I_1^*$, which $I_1^*$ is maximal interval of $U_1$, the energy conservation yields
	$$
	E_g(U_1(\bar{s})) = E_g(U_1(s_n)) \to E_{g,c}
	$$
	and hence $$E_g(U_1(\bar{s})) = E_{g,c}.$$
	From the energy trapping it follows that $\gs\|U_1(\bar{s})\|_{\h}^2 < \|Q_b\|_{\h}^2$. If $\|U_1\|_{S(I_1^*)} < \infty$, it is a contradiction to $(b)$ of Lemma \ref{en-con} .
	Therefore we deduce
	$$
	\|U_1\|_{S(I_1^*)} = \infty.
	$$
	By setting $U_1=u_c$ we conclude the proof of Proposition \ref{mebs}.
\end{proof}
\vspace{5mm}
\subsection{Compactness of the MEBS flows}
\begin{prop}\label{compactness}
	For any $u_c$ as in Proposition \ref{mebs}, with $\|u_c\|_{\sst} = +\infty$, there exist $\lambda(t) \in \mathbb{R}^+$, $t \in I_+^* (I_+^* := I^* \cap \left[0,\infty \right))$ such that
	$$
	\mathcal M = \left\{ v(x,t) := \lambda(t)^{-\frac12} u_c\left(t, \frac{x}{\lambda(t)}\right) : t \in I_+^*  \right\}
	$$
	has compact closure in $\hr$.
\end{prop}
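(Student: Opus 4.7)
The plan is to run the standard Kenig--Merle extraction. Given any sequence $\{t_n\}\subset I_+^*$, I want to produce scales $\lambda_n>0$ such that a subsequence of $\lambda_n^{-1/2}u_c(t_n,\cdot/\lambda_n)$ converges in $\hr$. If $\{t_n\}$ remains in a compact subset of $I^*$, $\h$-continuity of the flow (Proposition \ref{lwp}) gives convergence with $\lambda_n\equiv 1$, so I may assume $t_n\to\sup I_+^*$; after a time-reversal if necessary, I may also assume $\|u_c\|_{S(I_+^*)}=+\infty$. Set $v_n:=u_c(t_n)$. Energy coercivity (Lemma \ref{trapping}(iii)) gives $\|v_n\|_\h\le A$, and I need $\|e^{it\Delta}v_n\|_\s\ge\delta$ for the small-data threshold $\delta=\delta(A)$ of Proposition \ref{lwp}; otherwise small-data LWP forces $\|u_c\|_{S([t_n,\sup I_+^*))}\le 2\delta$, which together with $\|u_c\|_{S([0,t_n])}<\infty$ on compact subintervals contradicts $\|u_c\|_{\sst}=+\infty$.

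Apply Lemma \ref{profile} to $\{v_n\}$ to get profiles $V_{0,j}$, parameters $(\ljn,\tjn)$, $s_{j,n}:=-\tjn/\ljn^2$, and remainders $w_n^J$. Kinetic decoupling and Lemma \ref{trapping}(i) give $\gs\|V_{0,j}\|_\h^2<\|Q_b\|_\h^2$ and $\gs\|w_n^J\|_\h^2<\|Q_b\|_\h^2$ for large $n$, so by Lemma \ref{vari-lem} each $E_g(G_{j,n})$ and $E_g(w_n^J)$ is nonnegative, where $G_{j,n}=\ljn^{-1/2}V_j^l(s_{j,n},\cdot/\ljn)$. Energy conservation yields $E_g(v_n)\to\egc$; combined with Lemma \ref{profile}(v) this forces $\limsup E_g(V_1^l(s_{1,n}))\le\egc$. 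If in fact $\liminf E_g(V_1^l(s_{1,n}))<\egc$, Lemma \ref{en-con}(a) applied to $z_{0,n}=v_n$ (whose associated solution is $z_n=u_c(\cdot+t_n)$) yields $\|z_n\|_\s<\infty$, contradicting $\|u_c\|_{S(I_+^*)}=+\infty$. Therefore $\liminf E_g(V_1^l(s_{1,n}))=\egc$, and hence $E_g(V_j^l(s_{j,n}))\to 0$ for $j\ge 2$ and $E_g(w_n^J)\to 0$. Coercivity (Lemma \ref{trapping}(iii)) then forces $V_{0,j}=0$ for $j\ge 2$ and $\|w_n^J\|_\h\to 0$, collapsing the decomposition to
\begin{align*}
\ljn^{1/2}v_n(\lam_{1,n}\cdot)=V_1^l(s_{1,n})+o_\h(1).
\end{align*}

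Finally, pass to a subsequence so that $s_{1,n}\to\bar s\in[-\infty,\infty]$. If $\bar s=\pm\infty$, then $\|e^{it\Delta}V_1^l(s_{1,n})\|_{S(J)}=\|e^{it\Delta}V_{0,1}\|_{S(J+s_{1,n})}\to 0$ on an appropriate half-line $J\ni 0$ (since $\|e^{it\Delta}V_{0,1}\|_{S(\mathbb R)}<\infty$). Small-data LWP applied to the rescaled equation, whose coefficient $\lam_{1,n}^b g(\lam_{1,n}\cdot)$ still satisfies \eqref{scaling} uniformly in $n$, then places the solution from $v_n$ in the small-$\s$ regime on $J$, contradicting $\|u_c\|_{S(I_+^*)}=+\infty$. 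Hence $\bar s$ is finite, $V_1^l(s_{1,n})\to e^{i\bar s\Delta}V_{0,1}$ strongly in $\hr$, and setting $\lambda(t_n):=1/\lam_{1,n}$ gives $\lambda(t_n)^{-1/2}u_c(t_n,\cdot/\lambda(t_n))\to e^{i\bar s\Delta}V_{0,1}$ in $\hr$. Standard subsequence and diagonal arguments upgrade this to a choice of $\lambda(\cdot)$ on all of $I_+^*$ for which $\mathcal M$ is precompact. The main obstacle is orchestrating the collapse of the profile decomposition to a single surviving profile with finite $\bar s$: this hinges on deploying both parts of Lemma \ref{en-con} in the contrapositive, with the added wrinkle that the non-scale-invariant coefficient $g$ forces a careful check that the small-data and perturbation estimates transfer to the rescaled coefficients $g_\lambda$ uniformly in $\lambda$.
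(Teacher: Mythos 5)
Your proposal is correct and follows essentially the same route as the paper: reduce to a sequence $t_n \to \sup I_+^*$ with $\|e^{it\Delta}u_c(t_n)\|_{S(0,\infty)} \ge \delta$, apply the radial profile decomposition, use Lemma \ref{en-con} together with energy decoupling and coercivity to collapse to a single profile with vanishing remainder, and rule out $|s_n|\to\infty$ by the small-data theory. The only difference is packaging: the paper starts from the negation (an $\eta_0$-separated sequence modulo scaling) and contradicts it with the single-profile convergence, whereas you extract convergence directly and then invoke the standard equivalence between sequential compactness modulo scaling and the existence of the modulation function $\lambda(t)$ --- which is the same unproved point-set reduction that the paper's contradiction setup implicitly relies on.
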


\begin{proof}
	If $\mathcal M$ does not have a compact closure in $\hr$, then there exists $\eta_0 > 0$ and $\{t_n\}$ with $t_n > 0$ such that for all $\lambda_0 \in \mathbb{R}^{+}$,
	$$
	\left\|(\lambda_0)^{-\frac12} u \left(t_n,\frac{x}{\lambda_0}  \right)-u(t_{n'},x)\right\|_{\h} \ge \eta_0 \quad (n \neq n'),
	$$
	where $u = u_c$. Passing through a subsequence (still called $\{t_n\} $), $\{t_n\}$ is assumed to have a limit $\bar{t} \in \overline{I_+^*} = [0,T_+]$. Then $\bar{t} = T_+$ by the continuity of flow.
	We now may assume that $\|e^{it\Delta}u(t_n)\|_{S(0,\infty)} \ge \delta$, where $\de$ is as in Proposition \ref{lwp}. Since
	\begin{align*}
	E_g(u(t))=E_g(\varphi)&=E_{g,c}<E_g(Q_b),\\
	\gs\|\varphi\|_{\h}^2 &< \|Q_b\|_{\h}^2,
	\end{align*}
	we get
	$$
	\gs\|u(t)\|_{\h}^2 \le (1- \bar{\delta})\|Q_b\|_{\h}^2
	$$
	for $t \in I_+^*$.

	Now we apply the profile decomposition to $v_{0,n} = u(t_n)$. In view of the proof of Proposition \ref{mebs}, we conclude that
	$$
	\underset{n\to\infty}{\liminf}E_g(V_1^l(s_n)) \le E_{g,c},
	$$
	where $s_n = -\frac{t_{1, n}}{\lambda_{1, n}^2}$.
	Since $\|e^{it\Delta}v_{0,n}\|_{S(0,\infty)}\ge \delta$, if $\underset{n\to\infty}{\liminf}E_g(V_1^l(s_n)) < E_{g,c}$, then $\|v_n\|_{S(0, \infty)} < \infty$.
	This contradicts the fact that $u$ is MEBS. Thus
	$$
	\underset{n\to\infty}{\liminf}E_g(V_1^l(s_n)) = E_{g,c},
	$$
	which implies that $V_{0,j}=0$ for $j\ge 2$ and $\|w_n^J\|_{\h} \to 0$.
 Therefore we deduce that
\begin{align}
u(t_n) = (\lam_{1,n})^{-\frac12}V_1^l\left(s_n,\frac{x}{\lam_{1,n}} \right)+ w_n^J.
\end{align}
	
	If $\left\{s_n\right\}$ is unbounded, then from the profile decomposition (5.3) one can show that there exists subsequences (still called $\{s_n\}, \{t_n\}$) such that
	$\|e^{it\Delta}u(t_n)\|_{S(0,\infty)} \le \de$ or $\|e^{it\Delta}u(t_n)\|_{S(-\infty, 0)} < \delta$ for large $n$. From the LWP it follows that
	$\|u\|_{S(t_n, +\infty)} \le 2\delta$ or $\|u\|_{S(-\infty, t_n)} \le 2\delta$, respectively. This contradicts the fact that $u$ is MEBS. Therefore $\left\{s_n\right\}$ should be bounded.

	By passing to a subsequence we may assume that $s_n \to t_0 \in \mathbb{R}$. Since $\|w_n^J\|_{\h} \to 0$, for arbitrary $\lambda_0 > 0$
	$$
	\left\| (\lam_0)^{-\frac12}(\lam_{1,n})^{-\frac12}V_1^l\left( s_n,\frac{x}{\lam_{1,n}\lam_0}\right) - (\lam_{1,n'})^{-\frac12}V_1^l\left( s_{n'},\frac{x}{\lam_{1,n'}}\right) \right\|_{\h} \ge \frac{\eta_0}{2} \;\,\mbox{if}\;\,n \neq n'.
	$$
	By the change of variables $x \mapsto \lam_{1,n'}y$ we get
	$$
	\left\| \left(\frac{\lam_{1,n'}}{\lam_0\lam_{1,n}}\right)^{-\frac12}V_1^l\left(s_n,\frac{\lam_{1,n'}y}{\lam_{1,n}\lam_0}\right) - V_1^l\left(s_{n'},y\right) \right\|_{\h} \ge \frac{\eta_0}{2} \;\,\mbox{if}\;\,n \neq n'.
	$$
	Since $s_n \to t_0$, by choosing $\lam_0 = \frac{\lam_{1,n'}}{\lam_{1,n}}$, we reach another contradiction. This completes the proof of Proposition \ref{compactness}.
\end{proof}


\section{Rigidity theorem}

In this section, we will remove the MEBS $u_c$ by rigidity theorem under the condition $\egc < \eg(Q_b)$.

\begin{prop}[Rigidity]\label{rigidity}
	Suppose that $g$ is nonnegative, bounded radial function satisfying the conditions \eqref{scaling}, \eqref{var-con}, and \eqref{rig-con}. Let $\vp \in \hr$ satisfy that $\eg(\vp) < \eg(Q_b)$ and $\gs\|\vp\|_{\h}^2 < \|Q_b\|_{\h}^2$.  Let $u$ be the corresponding solution to \eqref{maineq} with $\vp$ and let $I^* = (-T_-,T_+)$ be the maximal existence time interval. Assume there exists $\lam(t) > 0$ such that
	$$
	\mathcal M:= \left\{ v(t, x) = (\lam(t))^{-\frac12}u\left(t,\frac{x}{\lam(t)}\right) : t \in [0,T_+)\right\}
	$$
	has compact closure in $\hr$. 
	Then $T_+ = +\infty$ and $\vp = 0$.
\end{prop}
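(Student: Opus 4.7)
The plan is to follow the Kenig--Merle rigidity strategy, handling the finite-time blow-up case $T_+ < \infty$ and the global case $T_+ = +\infty$ separately. Both steps reduce, via the compactness of $\mathcal M$, to truncated conservation-law quantities from which one forces $\vp \equiv 0$.

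First I would rule out $T_+ < \infty$. The small-data LWP in Proposition \ref{lwp} together with the compactness of $\mathcal M$ forces $\lam(t) \to \infty$ as $t \to T_+$; otherwise a subsequential limit $\lam(t_n) \to \lam_\infty < \infty$ would let us continue the solution past $T_+$ via the compactness limit and the LWP. I would then consider the localised mass $\omega_R(t) := \int \chi_R(x)|u(t,x)|^2\,dx$ with $\chi_R \in C_c^\infty$ equal to $1$ on $|x|\le R$ and supported in $|x| \le 2R$. The radial Strauss bound $|v(y)| \lesssim |y|^{-1/2}\|v\|_{\h}$ together with $u \in \hr$ makes $\omega_R$ finite. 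Rewriting $\omega_R(t) = \lam(t)^{-2}\int |v(t,y)|^2 \chi_R(y/\lam(t))\,dy$ in terms of the modulated profile $v$, compactness of the closure of $\mathcal M$ allows one to split the integrand into a uniformly bounded part on $|y|\le R_0$ and a tail which is small uniformly in $t$; combined with $\lam(t)^{-2} \to 0$ this yields $\omega_R(t) \to 0$ as $t \to T_+$. The direct estimate $|\omega_R'(t)| \lesssim R\,E_g(\vp)$ (a consequence of the equation and Lemma \ref{trapping}) then propagates this to $\omega_R(0) = 0$ for every $R$, so $\vp \equiv 0$.

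For $T_+ = +\infty$ with $\vp \not\equiv 0$, I would reach a contradiction via the localised virial. Let $b_R(x) = R^2 \psi(x/R)$ with $\psi$ a smooth radial cut-off equal to $|x|^2$ for $|x|\le 1$ and constant for $|x|\ge 2$, and set $z_R(t) = \int b_R |u|^2$. The generalised virial identity gives
\begin{align*}
z_R''(t) &= 4\int \partial_j\partial_k b_R \,\mathrm{Re}(\partial_j u\,\overline{\partial_k u})\,dx - \int \Delta^2 b_R\,|u|^2\,dx \\
&\quad - \frac{2(p-1)}{p+1}\int \Delta b_R\,g|u|^{p+1}\,dx - \frac{4}{p+1}\int \nabla b_R\cdot \nabla g\,|u|^{p+1}\,dx,
\end{align*}
which on $|x|\le R$ reduces to the full form $8\int|\nabla u|^2 - \frac{12(p-1)}{p+1}\int g|u|^{p+1} - \frac{8}{p+1}\int x\cdot \nabla g\,|u|^{p+1}$. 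The rigidity condition \eqref{rig-con} bounds $x\cdot \nabla g$ below by $-bg$, which together with the variational inequality \eqref{var-con} and the coercivity $\int(|\nabla u|^2 - g|u|^{p+1})\ge\bd\int|\nabla u|^2$ from Lemma \ref{trapping} delivers $z_R''(t)\ge c\,E_g(\vp) - \mathrm{Err}(R,t)$, where $\mathrm{Err}(R,t)$ is supported in $\{R\le |x|\le 2R\}$ and controlled by $\int_{|x|\ge R}(|\nabla u|^2 + g|u|^{p+1})\,dx$. Compactness of $\mathcal M$ and the scale-invariance of these integrals under $x\mapsto x/\lam(t)$ make the latter smaller than $\eta\,E_g(\vp)$ uniformly in $t$ once $R = R(\eta)$ is chosen large, so that $z_R''(t)\ge c_1 E_g(\vp)>0$ for all $t\ge 0$.

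On the other hand, Cauchy--Schwarz together with the Hardy inequality yields $|z_R'(t)|\lesssim R^2 \|\nabla u\|_{L_x^2}^2 \lesssim R^2 E_g(\vp)$ uniformly in $t$; integrating the lower bound $z_R''\ge c_1 E_g(\vp)$ over $[0,T]$ and letting $T\to\infty$ forces $E_g(\vp) = 0$, whence $\vp = 0$ by the coercivity in Lemma \ref{trapping}. I expect the main technical obstacle to be the precise handling of the sign-indefinite contribution $\int \nabla b_R\cdot \nabla g\,|u|^{p+1}$: both on the interior region $|x|\le R$, where \eqref{rig-con} is used exactly to absorb the associated error into the variational estimate so that the combined coefficient of $\int g|u|^{p+1}$ has the right sign, and on the annulus $R\le |x|\le 2R$, where the pointwise bound $|x||\nabla g(x)|\lesssim |x|^{-b}$ from \eqref{scaling} must be combined with the compactness-driven smallness of the $\dot H^1$ mass at infinity to keep the error below $c\,E_g(\vp)$.
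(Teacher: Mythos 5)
Your overall architecture --- ruling out $T_+<\infty$ via a truncated mass that vanishes as $t\to T_+$ because $\lam(t)\to\infty$, and ruling out $T_+=\infty$ via a localized virial whose second derivative is bounded below by coercivity while its first derivative stays bounded --- is exactly the paper's. But two steps, as written, do not close. First, in the finite-time case the assertion that the uniform bound on $\omega_R'$ ``propagates $\omega_R(t)\to0$ to $\omega_R(0)=0$'' is false: integrating only gives $\omega_R(0)\le \lim_{t\to T_+}\omega_R(t)+CT_+=CT_+$, which (after $R\to\infty$; note also that the derivative bound is $O(1)$, not $O(R)$, by Hardy's inequality, and this $R$-independence is what is actually needed) shows $\varphi\in L_x^2$ but not $\varphi=0$. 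The missing ingredient is mass conservation: once $\varphi\in L_x^2$ one writes $\|\varphi\|_{L_x^2}^2=\|u(T_+-\al)\|_{L_x^2}^2$, makes the interior part $\omega_R(T_+-\al)\le C\al$ small by integrating $\omega_R'$ back from times near $T_+$ where $\omega_R$ vanishes, and makes the exterior part small for $R$ large. You never invoke mass conservation, so your finite-time case is incomplete as stated.

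Second, in the global case the sign of the $\nabla b_R\cdot \nabla g$ term in your virial identity is wrong: the correct identity \eqref{lvirial} carries $+\frac{4}{p+1}\int(\nabla b_r\cdot\nabla g)|u|^{p+1}$, not $-$. This is not cosmetic. On $|x|\le R$ the term becomes $+\frac{8}{p+1}\int(x\cdot\nabla g)|u|^{p+1}$, and the rigidity hypothesis \eqref{rig-con}, $x\cdot\nabla g\ge -bg$, is a \emph{lower} bound, which only helps with the $+$ sign; one then checks $\frac{12(p-1)+8b}{p+1}=8$ for $p=5-2b$, so the interior contribution collapses to $8\int(|\nabla u|^2-g|u|^{p+1})$, which is bounded below by \eqref{lowbound-e}. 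With your $-$ sign, \eqref{rig-con} bounds the term in the useless direction (you would need an upper bound on $x\cdot\nabla g$, i.e.\ the blow-up hypothesis \eqref{virial-con} instead), and your coefficient $\frac{12(p-1)-8b}{p+1}$ is no longer $8$, so the clean cancellation you describe does not occur. Your prose makes clear you intend the correct mechanism, but the identity you would compute from must be corrected before the step is valid. The remaining ingredients (the bound $|z_R'|\lesssim R^2$, the annulus errors controlled via the compactness statement of Remark \ref{cpt}, and the integration in $t$ yielding the contradiction) match the paper.
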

\begin{rem}\label{lam}
	If there exists a sequence $\{t_i\} \subset [0, T_+)$ such that $\lambda(t_i) \to 0$ as $i \to \infty$, then from the compactness of $\mathcal M$ it follows that there exists a subsequence $\{t_{i_j}\}$ such that $u(t_{i_j}, \cdot) = \lambda(t_{i_j})^\frac12 v(t_{i_j}, \lambda(t_{i_j})\cdot) \to 0$ as $j \to \infty$. This implies that $T_+  = +\infty$ and $\varphi = 0$. Therefore, we may assume that $\lambda(t) \ge A_0$ for some constant $A_0 > 0$.
\end{rem}

\begin{rem}\label{cpt}
	Assume that $\lambda(t) \ge A_0$. Then the scaling invariance and compactness of $\mathcal U$ bring us to the fact that for any $\varepsilon > 0$ there exists $r = r(\varepsilon)$ such that
	\begin{align*}
	\int_{|x| > r}\left(|\nabla u|^2 +|u|^{p+1} + \frac{|u|^2}{|x|^2}\right)dx &= \int_{|y| > r\lam(t)}\left( |\nabla v|^2 +|v|^{p+1}+\frac{|v|^2}{|x|^2}\right)\,dx\\
	&\le \int_{|y|\ge A_0r}(\cdots) \le \varepsilon
	\end{align*}
	for all $t \in I^*$.
\end{rem}

The proof is divided into two cases: $T_+ < +\infty$ and $T_+ = +\infty$.
\subsection{Case: $T_+ < \infty$}
Suppose that $T_+ < +\infty$ and there exists a sequence $\{t_i\}$ such that $t_i \to T_+$ and $\lambda(t_i) \to \lambda_0 > 0$ as $i \to \infty$. Then by the compactness of $\mathcal U$, LWP, and long-time perturbation one can deduce that $\|u\|_{S(T_-+\delta, T_+-\delta)} < +\infty$ for some $\delta > 0$. This is a contradiction to the maximality of $T_+$ and hence implies that if $T_+ < +\infty$, then $\lambda(t) \to +\infty$ as $t \to T^+$. For details see p.667 of \cite{km}.

Now let $a(x)\in C_0^\infty(\rt)$ and $a_r(x)$ be as follows:
\begin{align*}a(x)&:=\left\{ \begin{array}{cc}
1 & (|x|\leq 1) \\
0 & (|x|\geq 10)
\end{array}  \right.,\quad
a_r(x):=a(\frac{x}{r}).
\end{align*}
And we define that
$$
y_r(t)= \int a_r|u(t)|^2dx
$$
for $t \in [0,T_+)$. Then the density by $H^2$ data yields
$$
y_r'(t) = 2 \mathrm{Im}\left( \int \bar{u}\nabla u \nabla a_r dx - \int a_rg|u|^{p+1} dx\right).
$$
Since $g$ is bounded, by Hardy-Sobolev inequality and energy trapping (Lemma \ref{trapping}) we have
\begin{align}\begin{aligned}\label{y'-bound}
|y_r'(t)| &\le C\left|\int \bar{u}\frac{1}{|x|} \nabla u \nabla a_r dx\right| + C\left|\int a_r|u|^{p+1} dx\right| \le C\|\nabla u\|_{L_x^2}\left\|\frac{u}{|x|}\right\|_{L_x^2} +C \|u\|_{L_x^{p+1}}^{p+1}\\
&\le C(\|Q_b\|_{\h}^2 +\|Q_b\|_{\h}^{p+1}).
\end{aligned}\end{align}
Next we will show that
\begin{align}\label{getL2}
\lim_{t \to T_+}\int_{|x|\le r}|u(t,x)|^2 dx = 0 \;\;\mbox{for all} \;\;r > 0.
\end{align}
Since $u(t, x) = (\lam(t))^{\frac12}v(t,\lam(t)x)$, we get
\begin{align*}
\int_{|x|<r}|u|^2 dx &= \int_{|y|<r\lam(t)}(\lam(t))^{-2}|v(t,y)|^2 dy\\
&= (\lam(t))^{-2}\int_{|y|<\varepsilon r \lam(t)}|v(t,y)|^2 dy + (\lam(t))^{-2}\int_{\varepsilon r \lam(t)<|y|<r\lam(t)}|v(t,y)|^2 dy\\
&=: A + B,
\end{align*}
where $\varepsilon$ will be determined later. Fixing $r$, by H\"older's inequality, we obtain
$$
A \le C\varepsilon^2r^2\|v(t)\|_{L_x^{p+1}}^2 \le C \varepsilon^2r^2\|Q_b\|_{\h}^2 < \frac\varepsilon2
$$
for any $\varepsilon < \frac12(C r^2\|Q_b\|_{\h}^2)^{-1}$. On the other hand, since $\lam(t) \to +\infty $ as $t\to T_+$, $B$ is estimated as
$$
B \le r^2\|v(t)\|_{L^{p+1}(|y|\ge\varepsilon r\lam(t))}^2 < \frac\varepsilon2 \;\;(t \;\;\mbox{close to}\;\; T_+)
$$
by Remark \ref{cpt}. Therefore we get \eqref{getL2}.

Since $|y_r'(t)|\le C$ from \eqref{y'-bound}, we have $y_r(0)\le y_r(t) + Ct$ for all $t \in [0, T_+)$ and thus
$$
y_r(0) \le \lim_{t\to T_+}y_r(t) + CT_+ = CT_+.
$$
Taking the limit $r \to \infty$, we get that $\vp\in L^2$. For any $\varepsilon > 0$, choose $\al$ small enough that
$$
\int_{T_+-\al}^{T_+}|y_r'|dx \le C\al < \frac{\varepsilon}{2}.
$$
Then the conservation of mass (Remark \ref{m-e-cons}) and \eqref{getL2} yield
\begin{align*}
\|\vp\|_{L_x^2}^2 = \|u(T_+-\al)\|_{L_x^2}^2 &\le \|u(T_+-\al)\|_{L_x^2(|x|<r)}^2 + \frac{\varepsilon}{2} \le y_r(T_+ -\al) + \frac{\varepsilon}{2}\\
&\le \lim_{t\to T_+}\int_{t}^{T_+-\al}y_r'(s)ds +\frac{\varepsilon}{2} < \varepsilon
\end{align*}
for a large $r$.
Since $\varepsilon$ is arbitrary, we have that $\vp=0$, which contradicts $T_+<\infty$.

\subsection{Case: $T_+ = \infty$}\label{case2}

Choose $\de > 0$ such that $\eg(\vp) \le (1-\de)\eg(Q_b)$. Then from Lemma \ref{trapping} (energy trapping) and compactness we deduce that there exists $r_0 > 0$ such that for $r > r_0$ and $t\in[0,\infty)$,
\begin{align}\label{lowbound-e}
\int_{|x| < r}(|\nabla u|^2 -g|u|^{p+1})\,dx \ge C_{\de}\|\vp\|_{\h}^2 > 0.
\end{align}

Let $b(x)\in C_0^\infty(\rt)$  and $b_r(x)$ be as follows:
\begin{align*}b(x)&:=\left\{ \begin{array}{cc}
|x|^2 & (|x|\leq 1) \\
0 & (|x|\geq 10)
\end{array}  \right.,\qquad
b_r(x):=r^2b(\frac{x}{r}).
\end{align*}
Set $z_r(t) = \int b_r|u(t)|^2 dx$. Then from the density by $H^2$ data and continuous dependency of solutions it follows that
\begin{align}\label{dilation}
\frac{d}{dt}z_r = 2 \textrm{Im}\int \nabla b_r \cdot \nabla u \bar{u}~dx
\end{align}
and
\begin{align}\begin{aligned}\label{lvirial}
\frac{d^2}{dt^2}z_r &= 2 \textrm{Im}\int \left[-\Delta b_r u_t \bar{u}-\left(\nabla b_r \cdot \nabla\bar{u}\right)u_t +\left(\nabla b_r \cdot \nabla u \right)\bar{u_t}\right] dx\\
&= 4\textrm{Re}\int(\nabla^2 b_r \cdot \nabla\bar{u})\nabla u dx  -\frac{2p-2}{p+1}  \int(\Delta b_r)g|u|^{p+1}dx\\
&\qquad + \frac{4}{p+1}\int\left(\nabla b_r \cdot \nabla g\right)|u|^{p+1} dx - \int (\Delta^2b_r)|u|^2 dx.
\end{aligned}\end{align}
\eqref{lvirial} has been obtained without regard to the radial symmetry.

By \eqref{dilation} we deduce that
$$
|z_r'(t)|\le Cr^2\int|\nabla u|^2dx \le C_{\de_0}r^2\|Q_b\|_{\h}^2.
$$
Since $x\cdot \nabla g \ge -bg$, by Remark \ref{cpt}, \eqref{lvirial} can be estimated as follows:
\begin{align}\begin{aligned}\label{z''}
z_r''(t) &\ge \underset{|x|\leq r}{\int}\left( 8|\nabla u|^2 -\frac{12p-12}{p+1}g|u|^{p+1} \right) +\frac{8}{p+1} \underset{|x|\leq r}{\int} (x\cdot\nabla g)|u|^4 dx\\
         &\qquad  \qquad \qquad  - C\left[\underset{r \le |x| \le 10r}{\int}|\nabla u|^2 +g|u|^{p+1} + \frac{|u|^2}{|x|^2}dx \right]\\
&\ge 8 \left[ \underset{|x|\leq r}{\int}|\nabla u|^2 -g|u|^{p+1}\right]  - C\left[\underset{r \le |x| \le 10r}{\int}|\nabla u|^2 +g|u|^{p+1} + \frac{|u|^2}{|x|^2}dx \right]\\
&\ge C_{\de_0}\|\vp\|_{\h}^2
\end{aligned}\end{align}
for $r$ sufficiently large. Therefore, by integrating \eqref{z''} over $[0, t]$, we get
\begin{align*}
|z_r'(t) - z_r'(0)| \le 2C_{\de_0}r^2\|Q_b\|_{\h}^2,\quad z_r'(t) - z_r'(0) \ge \left(\tilde{C}_{\de_0}\|\vp\|_{\h}^2\right)t,
\end{align*}
which are not compatible for arbitrarily large $t$. This completes the proof of Proposition \ref{rigidity} and hence Theorem \ref{mainresult} holds.

\section{Blow-up: Proof of Theorem \ref{blowup-thm}}
In this section, we show the finite time blowup via localized virial identity \eqref{lvirial}. To this end, we introduce a variational estimate which is fundamental part of the proof for Theorem \ref{blowup-thm}.

\begin{lem}\label{vari-nega}
	Let $\eg(\varphi)<(1-\delta_0)\eg(Q_b)$ and $\gs\|\varphi\|_{\h}^2 \ge \|Q_b\|_{\h}^2$ for some $\delta_0 > 0$. Then there exists $\bar{\delta}=\bar{\delta}(\delta_0)$ such that
	\begin{align}\label{negative}
	&(i)  \;\gs\|\varphi\|_{\h}^2 \ge (1+ \bar{\delta})\|Q_b\|_{\h}^2,\\
	&(ii) \;\int (|\nabla \varphi|^2 - (1-\eta)g|\varphi|^{p+1})\, dx \le -\frac{(p_0-(p_0+1)\eta)\bd}{\gs}\|Q_b\|_{\h}^2,
	\end{align}
	for $0 \le \eta \le k_g$, where $k_g = \frac{p_0-g_0}{p_0+1-g_0}$, $g_0 = \gs(p_0+1 - g_i)$, and $p_0 = \frac{p-1}2$.
\end{lem}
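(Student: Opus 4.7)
The plan is to exploit the same scalar function $f(y) = \tfrac12 y - \tfrac{C_*^{p+1}}{p+1} y^{(p+1)/2}$ used in the proof of Lemma \ref{vari-lem}, with $\bar y = \gs \|\vp\|_\h^2$ and $y_0 = \|Q_b\|_\h^2$. Recalling that $C_*^{p+1} = \|Q_b\|_\h^{1-p}$ (from the extremizer identity and Pohozaev), one checks $f$ has its unique critical point at $y_0$, is strictly increasing on $[0,y_0]$ and strictly decreasing on $[y_0,\infty)$, with $f(y_0) = \tfrac{p_0}{p+1}\|Q_b\|_\h^2$.

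For (i), I would repeat the estimate from Lemma \ref{vari-lem}: the Sobolev-type inequality combined with $g \le g_s|x|^{-b}$ and the algebraic identity $\gs \cdot g_s = \gs^{(p+1)/2}$ gives $f(\bar y) \le \gs \eg(\vp)$. Meanwhile the lower bound $g \ge g_i|x|^{-b}$ combined with Pohozaev yields $\eg(Q_b) \le \tfrac{p_0+1-g_i}{p+1}\|Q_b\|_\h^2$, so $\gs \eg(Q_b) \le \tfrac{g_0}{p_0}f(y_0) \le f(y_0)$ by the variational condition \eqref{var-con}. Combined with the energy hypothesis this gives $f(\bar y) \le (1-\delta_0) f(y_0)$. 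Since now $\bar y \ge y_0$ places us on the decreasing branch of $f$, strict monotonicity forces $\bar y \ge (1+\bd)y_0$ for some $\bd = \bd(\delta_0) > 0$.

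For (ii), the starting point is the algebraic identity
\[
\int \bigl(|\nabla\vp|^2 - (1-\eta) g|\vp|^{p+1}\bigr)\,dx = -\bigl(p_0 - (p_0+1)\eta\bigr)\|\vp\|_\h^2 + (1-\eta)(p+1) \eg(\vp),
\]
obtained by eliminating $\int g|\vp|^{p+1}$ via the definition of $\eg$. I would then insert the lower bound $\|\vp\|_\h^2 \ge (1+\bd)\|Q_b\|_\h^2/\gs$ from (i), the hypothesis $\eg(\vp) < (1-\delta_0)\eg(Q_b)$, and the upper estimate $(p+1)\eg(Q_b) \le (p_0+1-g_i)\|Q_b\|_\h^2$ derived in the previous step. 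A short computation shows that $\eta \le k_g$ is exactly equivalent to $(1-\eta) g_0 \le p_0 - (p_0+1)\eta$, with equality at $\eta = k_g$; using this to exchange the $(1-\eta) g_0$ term for $p_0 - (p_0+1)\eta$, everything telescopes to
\[
\int \bigl(|\nabla\vp|^2 - (1-\eta) g|\vp|^{p+1}\bigr)\,dx \le -\frac{(p_0 - (p_0+1)\eta)(\bd+\delta_0)}{\gs}\|Q_b\|_\h^2,
\]
which is in fact slightly stronger than the claimed bound.

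The only point requiring care is tracking the interplay of the two threshold conditions $g_0 \le p_0$ from \eqref{var-con} and $\eta \le k_g$: the former is precisely what makes $\gs \eg(Q_b) \le f(y_0)$ in step (i), the latter is precisely what enables the coefficient exchange in (ii). Otherwise the argument is a bookkeeping exercise strictly parallel to Lemma \ref{vari-lem}, and I do not foresee a genuine analytic obstacle.
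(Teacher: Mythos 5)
Your proposal is correct and follows essentially the same route as the paper: the same auxiliary function $f(y)=\tfrac12 y-\tfrac{C_*^{p+1}}{p+1}y^{(p+1)/2}$ with $f(\bar y)\le(1-\delta_0)f(y_0)$ and monotonicity on $[y_0,\infty)$ for $(i)$, and the same algebraic identity plus the bound $(p+1)\eg(Q_b)\le g_0\|Q_b\|_{\h}^2/\gs$ for $(ii)$, with your observation that $\eta\le k_g$ is equivalent to $(1-\eta)g_0\le p_0-(p_0+1)\eta$ matching the paper's step of writing the leading term as $-(p_0+1-g_0)(k_g-\eta)$ and discarding it. The only cosmetic difference is that you retain the factor $(1-\delta_0)$ on the energy term and hence obtain the marginally sharper constant $\bd+\delta_0$ in place of $\bd$.
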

\begin{proof}
	Let us invoke from the proof of Lemma \ref{vari-lem} that
	$$
	f(\bar{y}) \le(1-\de_0) f(y_0).
	$$
	where $f(y)= \frac12y - \frac{C_*^{p+1}}{p+1}y^{\frac{p+1}2}$, $\bar{y} = \gs \|\varphi\|_{\h}^2$, and $y_0 = \|Q_b\|_{\h}^2$.
	Since $y_0 \le \bar{y}$ and $f$ is strictly decreasing on $[y_0,+\infty)$, we find out that $ \bar{y} \ge (1+\bar{\delta})y_0$ for $\bar \delta$ as in Lemma \ref{vari-lem}.
	Hence we reach the following:
	\begin{align*}
	\int |\nabla\varphi|^2 -(1-\eta)g|\varphi|^{p+1}dx &= (p+1)(1-\eta)\eg(\vp) -(p_0- (p_0+1)\eta) \int |\nabla \vp|^2\\
	&\le ({p+1})(1-\eta)\eg(Q_b)-\frac{p_0- (p_0+1)\eta}{\gs}\|Q_b\|_{\h}^2-\frac{(p_0- (p_0+1)\eta)\bd}{\gs}\|Q_b\|_{\h}^2\\
	&\le \frac{g_0-p_0 -\eta(g_0-p_0-1)}{\gs}\|Q_b\|_{\h}^2 -\frac{(p_0- (p_0+1)\eta)\bd}{\gs}\|Q_b\|_{\h}^2\\
	&\le -\frac{((p_0+1-g_0))(k_g -\eta)}{\gs}\|Q_b\|_{\h}^2 -\frac{(p_0- (p_0+1)\eta)\bd}{\gs}\|Q_b\|_{\h}^2\\
    &\le -\frac{(p_0- (p_0+1)\eta)\bd}{\gs}\|Q_b\|_{\h}^2.
	\end{align*}
	Therefore we get the desired result.
\end{proof}
The following holds immediately from the Lemma \ref{vari-nega} together with the energy conservation and continuity argument.
\begin{cor}\label{blowup-cor}
	Let $u$ be a solution of \eqref{maineq} with $\varphi$  such that
	$$
	E_g(\varphi) \le (1-\delta_0) E_g(Q_b) \;\;\mbox{and}\;\; \gs \|\varphi\|_{\h}^2 \ge \|Q_b\|_{\h}^2
	$$
	for some $\delta_0 > 0$. Then there exits $\bar{\delta}=\bar{\delta}(\delta_0)$ such that
	\begin{align*}
	&(i)   \;\gs\|u(t)\|_{\h}^2 \ge (1+\bar{\delta})\|Q_b\|_{\h}^2,\\
	&(ii)  \;\int |\nabla u(t)|^2 -(1-\eta) g|u(t)|^{p+1} dx \le -\frac{(p_0-(p_0+1)\eta)\bd}{\gs}\|Q_b\|_{\h}^2
	\end{align*}
	for all $t \in I^*$ and $0 \le \eta \le k_g$, where $I^*$ is the maximal existence time interval and $k_g$ is the same as in Lemma \ref{vari-nega}.
\end{cor}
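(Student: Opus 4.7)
The plan is to run the standard energy-trapping continuity argument used to propagate variational estimates from $t=0$ to all of $I^*$, but adapted to the ``supercritical'' side $\gs\|u\|_{\h}^2 \ge \|Q_b\|_{\h}^2$. The key input is Lemma \ref{vari-nega}, which already gives both conclusions at $t=0$ with an explicit gap $\bar\delta$. Combined with conservation of energy from Remark \ref{m-e-cons}, we will show that the flow cannot cross the threshold $\gs\|u(t)\|_{\h}^2 = \|Q_b\|_{\h}^2$.

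First, applying Lemma \ref{vari-nega} to $\varphi$ yields
\[
\gs\|\varphi\|_{\h}^2 \ge (1+\bar\delta)\|Q_b\|_{\h}^2,
\]
so both (i) and (ii) hold at $t=0$. Next, Remark \ref{m-e-cons} gives $E_g(u(t)) = E_g(\varphi) \le (1-\delta_0)E_g(Q_b)$ for every $t \in I^*$, so the energy hypothesis of Lemma \ref{vari-nega} is preserved along the flow.

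The main step is to propagate (i). Suppose for contradiction that there exists $t_0 \in I^*$ with $\gs\|u(t_0)\|_{\h}^2 < \|Q_b\|_{\h}^2$. Since $u \in C(I^*; \h)$ by Proposition \ref{lwp}, the map $t \mapsto \gs\|u(t)\|_{\h}^2$ is continuous on $I^*$, and by the intermediate value theorem there exists $t_1$ between $0$ and $t_0$ such that $\gs\|u(t_1)\|_{\h}^2 = \|Q_b\|_{\h}^2$. Applying Lemma \ref{vari-nega} to $u(t_1)$, using the preserved energy hypothesis, forces
\[
\gs\|u(t_1)\|_{\h}^2 \ge (1+\bar\delta)\|Q_b\|_{\h}^2,
\]
which contradicts the equality at $t_1$. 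Therefore $\gs\|u(t)\|_{\h}^2 \ge \|Q_b\|_{\h}^2$ for every $t \in I^*$, and reapplying Lemma \ref{vari-nega} (now with $u(t)$ in place of $\varphi$) upgrades this to the quantitative bound (i) with the same $\bar\delta$.

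Finally, with (i) established for every $t \in I^*$, the bound (ii) follows at each $t$ by repeating the computation in the proof of Lemma \ref{vari-nega} with $u(t)$ in place of $\varphi$, using energy conservation to substitute $E_g(u(t)) = E_g(\varphi) \le (1-\delta_0)E_g(Q_b)$ in the step
\[
\int |\nabla u|^2 - (1-\eta)g|u|^{p+1}\, dx = (p+1)(1-\eta)E_g(u) - (p_0 - (p_0+1)\eta)\|u\|_{\h}^2.
\]
No obstacle of real substance appears; the only subtlety is the continuity/IVT argument used to rule out a kinetic-norm crossing, which is standard in Kenig--Merle-style arguments.
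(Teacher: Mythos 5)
Your argument is correct and is exactly the one the paper intends: the paper's proof of this corollary is the single sentence that it ``holds immediately from Lemma \ref{vari-nega} together with the energy conservation and continuity argument,'' and your write-up simply fleshes out that continuity/IVT step (the flow cannot touch the threshold $\gs\|u(t)\|_{\h}^2=\|Q_b\|_{\h}^2$ because Lemma \ref{vari-nega} applied at such a time would force the strictly larger value $(1+\bar\delta)\|Q_b\|_{\h}^2$) and then re-applies the lemma at each time. No substantive difference from the paper's approach.
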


\begin{proof}[Proof of Theorem \ref{blowup-thm}]
	We first show the part $(1)$. Let us invoke the localized virial identities \eqref{dilation} and \eqref{lvirial}. By integrating and taking limit $r\to \infty$ on both sides of \eqref{dilation} and \eqref{lvirial}, Fatou's lemma yields
	\begin{align*}
	\int |x|^2|u(t)|^2\,dx &\le 8\int_0^t\int_0^s\int\left(|\nabla u(t')|^2 - (1-k_g)g|u(t')|^{p+1}\right) dx dt'ds\\
	& \qquad \quad + 2 t{\rm Im}\int (\nabla \varphi \cdot x)\varphi\,dx + \int |x|^2|\varphi|^2\,dx.
	\end{align*}
	Then from Corollary \ref{blowup-cor} it follows that
	$$
	\int |x|^2|u(t)|^2\,dx \le - C_g\bd t^2 + 2 t{\rm Im}\int (\nabla \varphi \cdot x)\varphi\,dx + \int |x|^2|\varphi|^2\,dx
	$$
	for some constant $C_g$. The last inequality gives us that the maximal interval is bounded.
	
	For the part $(2)$, we need another $b_r$. Let us introduce the function $\beta \in C^4([0, \infty))$ such that $\beta(s) = s$ for $0 \le s \le 1$, smooth for $1 < s < 10$, and $0$ for $s \ge 10$ and further that
	$0 \le \beta \le 1$ and $\beta'(s) \le 1$ for all $s \ge 0$. For the construction of such function see Appendix B of \cite{bole}.
	
	Now let $\beta_r(s) = r \beta(\frac sr)$ and $b_r(|x|) = \int_0^{|x|}\beta_r(s)\,ds$. Then, by \eqref{lvirial} and radial symmetry of $u$, we get
	\begin{align}\begin{aligned}\label{lvirial2}
	\frac{d^2}{dt^2}z_r &= 4\int \beta_r'(|x|)|\nabla u|^2\,dx - \frac{2p-2}{p+1}\int \nabla \cdot \left(\frac{x}{|x|}\beta_r(|x|)\right)g|u|^{p+1}\,dx\\
	&\qquad  + \frac4{p+1}\int \frac{\beta_r(|x|)}{|x|}(x\cdot\nabla g) |u|^{p+1}\,dx - \int \Delta \nabla\cdot\left(\frac{x}{|x|}\beta_r(|x|)\right)|u|^2\,dx.
	\end{aligned}\end{align}
	Since $\beta_r'(s) \le 1$ and $x\cdot \nabla g \le (p+1)(k_g - \rho)g$, we then have
	\begin{align*}
	\frac{d^2}{dt^2} z_r(t) &\le 4{\int}\left(|\nabla u(t)|^2 - (1-k_g+\rho)g|u(t)|^{p+1}\right) dx\\
	&\qquad\qquad + 4 \int\left[1 - \frac{{p-1}}{2(p+1)}\nabla \cdot \left(\frac{x}{|x|}\beta_r(|x|)\right)\right]g|u|^{p+1} - \int \Delta \nabla\cdot\left(\frac{x}{|x|}\beta_r(|x|)\right) |u|^2dx\\
	&\le 4{\int}\left(|\nabla u(t)|^2 - (1-k_g+\rho)g|u(t)|^{p+1}\right) dx + C\left[\underset{|x| \ge r}{\int} g|u|^{p+1} + \frac{|u|^2}{|x|^2}dx \right]\\
	&\le 4{\int}\left(|\nabla u(t)|^2 - (1-k_g+\rho)g|u(t)|^{p+1}\right) dx + Cg_s\||x|^{-\frac{2b}{3p-7}}u\|_{L_x^\infty(|x| \ge r)}^{\frac{3p-7}2}\|u\|_{L_x^6}^{\frac{15-3p}4}\|u\|_{L_x^2}^{\frac{3+p}4}\\
&\qquad\qquad\qquad\qquad\qquad\qquad\qquad\qquad\qquad\quad\;\; + Cr^{-2}\|u\|_{L_x^2}^2.
	\end{align*}
	
	To control the second term we use the decay estimate of radial function $f$ \cite{stra, chooz-ccm}:
	$$
	\||x|^{\frac12}f\|_{L_x^\infty} \le C_0\|f\|_{L_x^2}^\frac12\|\nabla f\|_{L_x^2}^\frac12.
	$$
	The mass conservation (Remark \ref{m-e-cons}) gives us that
	\begin{align*}
	\frac{d^2}{dt^2} z_r(t) \le 4(1+\varepsilon(r))\int\left(|\nabla u(t)|^2 - \frac{(1-k_g+\rho)}{1+\varepsilon(r)}g|u(t)|^{p+1}\right) dx + Cr^{-2}\|\varphi\|_{L_x^2}^2,
	\end{align*}
	where $\varepsilon(r) =\frac14 CC_0^{\frac{3p-7}2}C_1^{\frac{15-3p}4}g_s\|\varphi\|_{L_x^2}^{p-1}r^{-\frac{p+3}{6p-14}}$ where $C_1$ is Sobolev embedding constant $\h \hookrightarrow L_x^6$. Hence if we choose $r$ large enough, then since $\rho > 0$, by Corollary \ref{blowup-cor} we deduce that
	\begin{align*}
	\frac{d^2}{dt^2} z_r(t) \;\le\; -\frac{C_g\bd}2.
	\end{align*}
	By the same argument as of $(1)$ we obtain the desired result.

\end{proof}
\section{Appendix}

In this section we prove the nonexistence of  positive radial solution to \eqref{g-ell} by the same argument of \cite{dini}. Let us consider the second order ODE:
\begin{align}\label{ode}
\left\{
\begin{array}{l}
Q_{rr} + \frac2rQ_r + g(r)Q^p = 0,\;\;\mbox{in}\;\;(0, \infty),\\
Q_r(0) = 0, \; Q(0) = Q_0  > 0,
\end{array}\right.
\end{align}
where $p = 5-2b$ and $0 < b < 2$.

\begin{prop}\label{noex}
Let $g$ satisfy the conditions \eqref{scaling} with $0 < b < 1$. Set $H(r) = \int_0^r s^{3-b}(s^b g)'\,ds$. Suppose that $H(r) \ge 0$ for all $r \ge 0$ and there exists $R > 0$ such that $H(r) \ge H(R) > 0$ for all $r \ge R$. Then every global solution $Q$ to \eqref{ode} must have a finite zero in $(0, \infty)$.
\end{prop}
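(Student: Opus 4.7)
The plan is to argue by contradiction, assuming $Q(r) > 0$ for all $r \in (0, \infty)$, and to derive a sign inconsistency from a Pohozaev-type identity adapted to the weight $g$. Writing \eqref{ode} as $(r^2 Q_r)_r = -r^2 g(r) Q^p \le 0$ with $Q_r(0) = 0$, I get $Q_r \le 0$ throughout, so $Q$ has a limit $Q_\infty \ge 0$. Setting $I(r) := \int_0^r s^2 g(s) Q(s)^p\,ds$, the relation $r^2 Q_r(r) = -I(r)$ together with positivity of $Q$ forces $I_\infty := I(\infty) < \infty$ (otherwise $Q$ becomes negative).

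The key step is to multiply the ODE by $r^3 Q_r$ and integrate on $[0,R]$, then multiply by $r^2 Q$ and integrate, using the latter to eliminate $\int_0^R r^2 (Q_r)^2\,dr$ from the former. Because $p = 5 - 2b$, the remaining coefficient $\tfrac{1}{2} - \tfrac{3}{p+1}$ equals $-\tfrac{b}{p+1}$, which is exactly what is needed to combine the non-local terms $-\tfrac{b}{p+1}\int r^2 g\, Q^{p+1}$ and $-\tfrac{1}{p+1}\int r^3 g'\, Q^{p+1}$ into $-\tfrac{1}{p+1}\int_0^R r^{3-b}(r^b g)'(r)\, Q^{p+1}(r)\,dr = -\tfrac{1}{p+1}\int_0^R H'(r) Q^{p+1}(r)\,dr$. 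After one integration by parts on the right side and using the representation $R^3 g(R) - H(R) = (3-b)\int_0^R s^2 g\,ds$, the identity takes the clean form
\[
\widetilde J(R) := \frac{R^3}{2}(Q_r)^2 + \frac{R^2 Q\, Q_r}{2} + \frac{(3-b)\int_0^R s^2 g\,ds}{p+1}\,Q^{p+1}(R) = \int_0^R H(r) Q^p(r)(-Q_r(r))\,dr.
\]

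The right-hand side is nonnegative, and the hypothesis $H(r) \ge H(R) > 0$ for $r \ge R$ gives, for $R' > R$, $\widetilde J(R') \ge \tfrac{H(R)}{p+1}(Q^{p+1}(R) - Q^{p+1}(R'))$. Since $H(R) > 0$ forces $g(R) > 0$ via the representation above, $Q$ is strictly decreasing near $R$, so $Q(R) > Q_\infty$; hence $\widetilde J_\infty \ge \tfrac{H(R)(Q^{p+1}(R) - Q_\infty^{p+1})}{p+1} > 0$. The contradiction comes from showing $\widetilde J_\infty \le 0$ via two cases. If $Q_\infty = 0$, the formula $Q(R') - Q_\infty = \int_{R'}^\infty I(s)/s^2\,ds$ yields $Q(R') \sim I_\infty/R'$, and combined with $g \le g_s r^{-b}$ (and $b < 3$) each of the three terms of $\widetilde J(R')$ tends to zero, so $\widetilde J_\infty = 0$. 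If $Q_\infty > 0$, the finiteness of $I_\infty$ together with $Q \ge Q_\infty$ forces $L := \int_0^\infty s^2 g\,ds \le I_\infty/Q_\infty^p < \infty$; passing to the limit gives $\widetilde J_\infty = -\tfrac{I_\infty Q_\infty}{2} + \tfrac{(3-b) L Q_\infty^{p+1}}{p+1}$, and applying $I_\infty \ge L Q_\infty^p$ yields $\widetilde J_\infty \le \tfrac{L Q_\infty^{p+1}(5 - p - 2b)}{2(p+1)} = 0$, again by $p = 5 - 2b$.

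The main obstacle is arranging the Pohozaev algebra so that the non-local terms collapse exactly into $\int_0^R H'(r) Q^{p+1}(r)\,dr$; this alignment works only at the critical exponent $p = 5 - 2b$, and the same relation is also what produces the cancellation $5 - p - 2b = 0$ in Case B. The remaining asymptotic estimates then follow routinely from $r^2 Q_r = -I(r)$ and the bound $g \le g_s r^{-b}$.
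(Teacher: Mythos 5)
Your Pohozaev computation is correct and is in substance the same identity the paper uses: your $\int_0^R H Q^p(-Q_r)\,dr$ (after the integration by parts) is the paper's quantity $V(R)=\frac1{p+1}\int_0^R s^{3-b}(s^bg)'Q^{p+1}ds$, and your lower bound $\widetilde J(R')\ge \frac{H(R)}{p+1}\bigl(Q^{p+1}(R)-Q_\infty^{p+1}\bigr)>0$ is a clean substitute for the paper's second mean value theorem step. The gap is the assertion $I_\infty<\infty$, on which both of your cases rest. Positivity of $Q$ gives only $\int_0^\infty I(s)s^{-2}\,ds\le Q_0$, hence $I(r)=o(r)$ and $Q(r)\ge I(r)/r$; it does \emph{not} give $I(\infty)<\infty$. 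In fact $I_\infty<\infty$ is essentially equivalent to the fast decay $Q(r)=O(r^{-1})$, and that is the whole difficulty: the a priori bound available here (Ni's Theorem 3.35, used in the paper) is only $Q(r)=O(r^{-1/2})$, and with $g\sim g_s r^{-b}$ and $Q\sim r^{-1/2}$ one gets $I(r)\sim\int^r s^{2-b}\cdot s^{-p/2}\,ds\sim r^{1/2}\to\infty$ (since $2-b-\frac p2=-\frac12$ at $p=5-2b$). In that slow-decay regime each of your three boundary terms is $O(1)$ rather than $o(1)$: $\tfrac{R^3}{2}Q_r^2\sim\tfrac{A^2}{8}$, $\tfrac{R^2QQ_r}{2}\sim-\tfrac{A^2}{4}$, and $\int_0^R s^2g\,ds\cdot Q^{p+1}\sim R^{3-b}\cdot R^{-(p+1)/2}=O(1)$, so Case A's conclusion $\widetilde J_\infty=0$ does not follow. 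Ruling out exactly this scenario is the content of the paper's proof: it shows $W(r)=r^{1/2}Q(r)$ is bounded and has no local minimum (using $V\ge\delta$), so $W\to A$, and then invokes Ni's Lemma 5.25 to produce a sequence $r_j\to\infty$ along which $W_1(r_j)=r_jW'(r_j)\to0$ and $r_jW_1'(r_j)\to0$, forcing the boundary terms to converge to a nonpositive multiple of $A^2$ and contradicting $V\ge\delta>0$. You cannot bypass this step by declaring $I_\infty<\infty$.

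Your Case B has a secondary problem: if $Q_\infty>0$ then the hypothesis $H\ge0$ (which reads $rG'(r)\ge(3-b)G(r)$ for $G(r)=\int_0^rs^2g\,ds$, with $G$ eventually positive because $H(R)>0$ forces $g(R)>0$) gives $G(r)\gtrsim r^{3-b}$, hence $I(r)\ge Q_\infty^pG(r)\to\infty$ and $\int^\infty I(s)s^{-2}ds=\infty$, so $Q$ becomes negative directly. Thus the premises of your Case B ($I_\infty<\infty$, $L<\infty$) are actually never satisfied, and the computation there, while formally yielding $\widetilde J_\infty\le0$, is reasoning from false hypotheses; the case should instead be eliminated outright as above. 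The essential missing idea, however, is the treatment of the slowly decaying alternative in Case A.
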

\begin{rem}\label{b>1-rem}
The condition $b < 1$ is used to give a meaning to $Q_r(0)$. If $g \in C^1(\mathbb R^3)$, then Proposition \ref{noex} holds for $1 \le b < 2$.
\end{rem}

\begin{rem}\label{noex-rem}
If $g$ satisfies the conditions \eqref{scaling} and \eqref{rig-con} $((r^bg)' \ge 0)$, then the corresponding $H$ satisfies the assumption of the proposition. Therefore \eqref{g-ell} has no positive radial solutions in $\mathbb R^3$.
\end{rem}

\begin{proof}
We prove by contradiction. Suppose that there exists $Q_0 > 0$ such that the solution $Q$ remains positive in $(0, \infty)$.
Then from the equation \eqref{ode} it follows that
$$
Q_r(r) = -\int_0^r\left(\frac sr\right)^2 g(s) Q(s)^p\,ds \le 0.
$$
Thus $Q$ is monotonically decreasing.

Now set
$$
V(r) = \frac1{p+1}\int_0^r s^{3-b}(s^bg)' Q^{p+1}\,ds
$$
Let $R_0$ be the last zero of $H$ in $(0, \infty)$. Then $R_0 < R$. By the Second Mean Value Theorem we have that for $r \ge R$
\begin{align*}
V(r) &= \frac1{p+1}\left(\int_0^{R_0}  + \int_{R_0}^R + \int_R^r\right) s^{3-b}(s^bg)' Q^{p+1}\,ds\\
&= \frac1{p+1}(Q_0^{p+1}H(r_0) + Q^{p+1}(R_0)(H(r_1) - H(R_0)) + Q^{p+1}(R)(H(r_2) - H(R)))\\
&\ge \frac1{p+1}Q^{p+1}(R_0)H(r_1) =: \delta,
\end{align*}
where $r_0 \in [0, R_0]$, $r_1 \in (R_0, R] $, and $r_2 \in [R, r]$. Here $r_1$ should be strictly greater than $R_0$ because 
\begin{align*}
&Q^{p+1}(R_0)\int_{R_0}^{r_1}s^{3-b}(s^bg)' Q^{p+1}\,ds = \int_{R_0}^R s^{3-b}(s^bg)' Q^{p+1}\,ds\\
 &\qquad = H(R)Q(R) - H(R_0)Q(R_0) - (p+1)\int_{R_0}^R H(s)Q^p(s)Q'\,ds \ge H(R)Q(R) > 0.
\end{align*} 
Since $R_0$ and $R$ are fixed, $\delta > 0$ for all $r \ge R$.

To describe $V$ in detail let us introduce Pohozaev identity:
\begin{lem}[see Lemma 3.7 of \cite{dini}]\label{poh}
Let $\Omega$ be a bounded smooth domain in $\mathbb R^3$. Let $Q$ be classical solution to \eqref{g-ell}. Then we have
\begin{align*}
&\int_{\Omega}\left(\frac{b}{p+1} g Q^{p+1} + \frac1{p+1}(x\cdot \nabla g)Q^{p+1}\right)\,dx\\
&\qquad = \int_{\partial \Omega} \left((x\cdot \nabla Q)\frac{\partial Q}{\partial \nu} - (x\cdot \nu)\frac{|\nabla Q|^2}2 + \frac1{p+1}(x\cdot \nu)gQ^{p+1} + \frac12 Q\frac{\partial Q}{\partial\nu}\right)\,dS,
\end{align*}
where $dS$ is the volume element of $\partial \Omega$ and $\nu$ is the unit outer normal vector on $\partial \Omega$.
\end{lem}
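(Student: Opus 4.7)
The plan is to derive the identity by the standard Pohozaev multiplier method, using two test functions — $x\cdot\nabla Q$ and $Q$ — and combining the resulting relations. Since $\Omega$ is a bounded smooth domain and $Q$ is a classical solution, all the integrations by parts below are straightforward applications of the divergence theorem.

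First I would multiply $\Delta Q + gQ^p = 0$ by $x\cdot\nabla Q$ and integrate over $\Omega$. For the Laplacian term I would use the classical identity
\[
\int_\Omega (x\cdot\nabla Q)\Delta Q\,dx = \frac12\int_\Omega|\nabla Q|^2\,dx - \frac12\int_{\partial\Omega}(x\cdot\nu)|\nabla Q|^2\,dS + \int_{\partial\Omega}(x\cdot\nabla Q)\frac{\partial Q}{\partial\nu}\,dS,
\]
which one obtains by writing $(x\cdot\nabla Q)\Delta Q = \nabla\cdot(\nabla Q\,(x\cdot\nabla Q)) - \nabla Q\cdot\nabla(x\cdot\nabla Q)$ and recognizing $\nabla Q\cdot\nabla(x\cdot\nabla Q) = |\nabla Q|^2 + \tfrac12 x\cdot\nabla|\nabla Q|^2$. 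For the nonlinear term I would rewrite $(x\cdot\nabla Q)\,Q^p = \tfrac{1}{p+1}x\cdot\nabla(Q^{p+1})$ and integrate by parts against $g$, producing $-\tfrac{3}{p+1}\int_\Omega gQ^{p+1}-\tfrac{1}{p+1}\int_\Omega(x\cdot\nabla g)Q^{p+1}+\tfrac{1}{p+1}\int_{\partial\Omega}(x\cdot\nu)gQ^{p+1}\,dS$, where the factor $3$ comes from $\nabla\cdot x$ in dimension $3$.

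Next I would multiply the equation by $Q$ and integrate to obtain
\[
\int_\Omega |\nabla Q|^2\,dx = \int_\Omega gQ^{p+1}\,dx + \int_{\partial\Omega}Q\,\frac{\partial Q}{\partial\nu}\,dS.
\]
Adding $\tfrac12$ times this relation to the first identity eliminates $\int_\Omega|\nabla Q|^2$ in favor of $\int_\Omega gQ^{p+1}$. The coefficient of the latter becomes $\tfrac12-\tfrac{3}{p+1}=\tfrac{p-5}{2(p+1)}$; substituting the critical exponent $p=5-2b$ collapses this to $-\tfrac{b}{p+1}$, which is precisely what produces the $\tfrac{b}{p+1}gQ^{p+1}$ term on the left-hand side of the claimed identity. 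Rearranging the boundary contributions gives the four surface integrals in the statement.

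I do not anticipate a serious obstacle: the computation is a bookkeeping exercise, and the only place where something nontrivial happens is the cancellation that singles out the inhomogeneity $b/(p+1)$, which is exactly the energy-critical scaling manifestation. The only care required is that we are genuinely allowed to integrate by parts — but this is granted by the hypothesis that $Q$ is a classical solution on a bounded smooth $\Omega$, so $Q\in C^2(\overline\Omega)$ and $g\in C^1(\Omega\setminus\{0\})$ suffices provided either $0\notin\overline\Omega$ or one first integrates on $\Omega\setminus B_\varepsilon(0)$ and sends $\varepsilon\to 0$ using the local integrability of $|x|^{-b}$ and $|x|^{1-b}$ for $b<2$, which gives vanishing boundary contributions at the origin.
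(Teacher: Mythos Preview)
Your argument is correct and is the standard Pohozaev computation; the paper does not supply its own proof of this lemma but simply cites Lemma~3.7 of Ding--Ni, whose derivation is exactly the multiplier method you outline. Your bookkeeping checks out, including the coefficient $\tfrac{3}{p+1}-\tfrac12=\tfrac{b}{p+1}$ at the critical exponent $p=5-2b$, and your remark about excising a neighborhood of the origin when $0\in\Omega$ is the right way to justify the integrations by parts under \eqref{scaling}.
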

If $\Omega$ is the ball with radius $r$ centered at the origin, then the Pohozaev identity shows that
\begin{align}\begin{aligned}\label{vr}
V(r) &= \frac1{p+1}\int_0^r s^3 g' Q^{p+1}\,ds + \frac{b}{p+1}\int_0^r s^2 g Q^{p+1}\,ds\\
&= \frac1{|S^2|} \int_{\Omega}\left(\frac{b}{p+1}g Q^{p+1} + \frac1{p+1}(x\cdot \nabla g)Q^{p+1}\right)\,dx\\
&= 3r^3Q_r^2(r) + r^3g(r)Q^{p+1}(r) + 2r^2Q(r)Q_r(r) \ge \delta > 0
\end{aligned}\end{align}
for all $r \ge R$.

Since $g$ satisfies \eqref{scaling}, by Theorem 3.35 of \cite{ni} we can deduce that $$W(r) := r^\frac12 Q(r)$$ is bounded for $r \ge R$.
We can also show that $W$ has no local minimum in $(R, \infty)$. In fact, if $W$ has a local minimum at $\rho > R$, then
$$
W'(\rho) = \frac12\rho^{-\frac12}Q(\rho) + \rho^\frac12 Q'(\rho) = 0
$$
and
$$
W''(\rho) = -\frac1{4}\rho^{-\frac32} Q(\rho) + \rho^{-\frac12}Q'(\rho) + \rho^\frac12 Q''(\rho) \ge 0.
$$
Combining these with \eqref{ode}, we have
\begin{align}\label{lowQ}
\rho^3 g(\rho)Q^{p+1}(\rho) \le \frac14\rho Q^2(\rho).
\end{align}
On the other hand, from \eqref{vr} it follows that
$$
\rho^3g(\rho)Q^{p+1}(\rho) \ge -3\rho^3(Q'(\rho))^2 - 2\rho^2Q(\rho)Q'(\rho) + \delta = \frac14\rho Q^2(\rho) + \delta.
$$
This contradicts \eqref{lowQ}.

Now we may assume that $W(r) \to A$ as $r \to \infty$ for some constant $A \ge 0$ because $W$ is bounded, positive, and has no local minimum.
From this we deduce that $\liminf_{r \to \infty}|W_1(r)| = 0$, where $W_1(r) = rW'(r)$. Lemma 5.25 of \cite{ni} gives us  that there exists a sequence $r_j \to \infty$ such that
$$
W_1(r_j) \to 0\;\;\mbox{and}\;\; r_jW_1'(r_j) \to 0\;\;\mbox{as}\;\;j\to \infty.
$$
Since
\begin{align*}
&W_1(r_j) = \frac12 W(r_j) + r_j^\frac32 Q'(r_j) \to 0,\\
&r_jW_1'(r_j) = W_1(r_j) + r_j^2W''(r_j) \to 0,
\end{align*}
we get
$$
r_j^2W''(r_j) \to 0, r_j^\frac32 Q'(r_j) \to -\frac12A, \;\;\mbox{and}\;\; r_j^\frac52Q''(r_j) \to \frac34 A.
$$
From \eqref{ode} it follows that
$$
r_j^{3}g(r_j)Q^{p+1}(r_j) \to \frac14 A^2.
$$
Invoking \eqref{vr}, we have
\begin{align*}
0 < \delta \le 3r_j^{3}Q'^2(r_j) + r_j^{3}g(r_j)Q^{p+1}(r_j) + 2r_j^{2}Q(r_j)Q'(r_j) \to 0,
\end{align*}
which reach a contradiction. This completes the proof of Proposition \ref{noex}.

\end{proof}

\section*{Acknowledgements}
This work was supported by NRF-2018R1D1A3B07047782(Republic of Korea).


\end{document}